\crefname{equation}{}{}
\apptocmd{\sloppy}{\hbadness 10000\relax}{}{} 
\crefname{algocf}{Algorithm}{Algorithms}
\crefname{equation}{}{} 
\crefname{conjecture}{Conjecture}{Conjectures} 
\colorlet{refkey}{orange!20}
\colorlet{labelkey}{blue!30}
\crefname{algocf}{Algorithm}{Algorithms}
\numberwithin{equation}{section}
\newtheorem{theorem}{Theorem}[section]
\newtheorem{proposition}[theorem]{Proposition}
\newtheorem{lemma}[theorem]{Lemma}
\crefname{claim}{Claim}{Claims}
\newtheorem*{question*}{Question}
\theoremstyle{definition}
\newtheorem{definition}[theorem]{Definition}
\newtheorem*{definition*}{Definition}
\theoremstyle{remark}
\newtheorem*{remark}{Remark}
\newcommand{\snorm}[1]{\lVert#1\rVert}
\newcommand{\bs}{\boldsymbol}
\newcommand{\mb}{\mathbb}
\newcommand{\mbf}{\mathbf}
\newcommand{\mbm}{\mathbbm}
\newcommand{\mc}{\mathcal}
\newcommand{\mr}{\mathrm}
\newcommand{\on}{\operatorname}
\newcommand{\wh}{\widehat}
\newcommand{\wt}{\widetilde}
\title{Random symmetric matrices: rank distribution and irreducibility of the characteristic polynomial}
\author[Ferber]{Asaf Ferber}
\address{Department of Mathematics, University of California, Irvine.}
\email{asaff@uci.edu}
\author[Jain]{Vishesh Jain}
\address{Department of Statistics, Stanford University, Stanford, CA 94305, USA}
\email{visheshj@stanford.edu}
\author[Sah]{Ashwin Sah}
\author[Sawhney]{Mehtaab Sawhney}
\address{Department of Mathematics, Massachusetts Institute of Technology, Cambridge, MA 02139, USA}
\email{\{asah,msawhney\}@mit.edu}
\thanks{Ferber was supported in part by NSF grants DMS-1954395 and DMS-1953799. Sah and Sawhney were supported by NSF Graduate Research Fellowship Program DGE-1745302.}
\begin{document}

\begin{abstract}
Conditional on the extended Riemann hypothesis, we show that with high probability, the characteristic polynomial of a random symmetric $\{\pm 1\}$-matrix is irreducible. This addresses a question raised by Eberhard in recent work. The main innovation in our work is establishing sharp estimates regarding the rank distribution of symmetric random $\{\pm 1\}$-matrices over $\mb{F}_p$ for primes $2 < p \leq \exp(O(n^{1/4}))$. Previously, such estimates were available only for $p = o(n^{1/8})$. At the heart of our proof is a way to combine multiple inverse Littlewood--Offord-type results to control the contribution to singularity-type events of vectors in $\mb{F}_p^{n}$ with anticoncentration at least $1/p + \Omega(1/p^2)$. Previously, inverse Littlewood--Offord-type results only allowed control over vectors with anticoncentration at least $C/p$ for some large constant $C > 1$.   
\end{abstract}

\maketitle

\section{Introduction}\label{sec:introduction}
The irreducibility of random polynomials has attracted much interest in recent years. A well-known conjecture of Odlyzko and Poonen \cite{OP93} is that the random polynomial $P(x) = x^{d} + b_{d-1}x^{d-1} + \dots + b_{1}x + b_{0}$, where $b_{0} = 1$ and $b_1,\dots, b_{d-1}$ are i.i.d.~$\on{Ber}(1/2)$ random variables (i.e.~each $b_i$ is independently $0$ or $1$ with probability $1/2$ each), is irreducible in $\mb{Z}[x]$ with probability $1-o_d(1)$. This was established in a more general form by Breuillard and Varj\'u \cite{BV19} under the Riemann Hypothesis for a family of Dedekind zeta functions. A version of this conjecture, where $b_0,\dots, b_{d-1}$ are distributed uniformly in $\{1,\dots, L\}$ for $L$ divisible by at least $4$ distinct primes, was established (unconditionally) by Bary-Soroker and Kozma \cite{BK20}. In recent work, Bary-Soroker, Koukoulopoulos, and Kozma \cite{bary2020irreducibility} showed that the result continues to hold for $\{1,\dots, L\}$ for $L\geq 35$. We refer the reader to \cite{BV19,BK20,bary2020irreducibility} for more precise and general versions of the aforementioned results. 

Another popular model of random polynomials is the characteristic polynomial of a random matrix. It was conjectured by Babai in the 1970s (and again, by Vu and Wood in 2009) that for an $n\times n$ matrix $N_n$ whose entries are i.i.d.~Rademacher random variables (i.e.~$\pm 1$ with probability $1/2$ each), the characteristic polynomial $\wh{\varphi}(t) = \det(tI_n-N_n)$ is irreducible with probability $1-o_n(1)$. This was confirmed, under the extended Riemann Hypothesis, in recent work of Eberhard \cite{Ebe20}, building on \cite{BV19} and ideas from the non-asymptotic theory of random matrices. It is perhaps even more natural to consider the (real-rooted) characteristic polynomial $\varphi(t) = \det(tI_n - M_n)$, where $M_n$ is an $n\times n$ symmetric matrix whose entries on and above the diagonal are i.i.d.~Rademacher random variables. In \cite{Ebe20}, Eberhard asked if $\varphi(t)$ is irreducible with probability $1-o_n(1)$. We answer this question in the affirmative under the extended Riemann hypothesis. 


\begin{theorem}\label{thm:main}
Assume the extended Riemann Hypothesis (ERH) (i.e., the Riemann Hypothesis for Dedekind zeta functions for all number fields). Then there is an absolute constant $c>0$ such that characteristic polynomial $\varphi(t) = \det(tI_n - M_n)$ of an $n\times n$ random symmetric Rademacher matrix $M_n$ is irreducible with probability at least $1-2\exp(-cn^{1/4})$.
\end{theorem}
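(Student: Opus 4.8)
The plan is to follow the strategy pioneered by Eberhard for the non-symmetric case, adapting it to the symmetric model using the sharp rank-distribution estimates that are the main technical contribution of the paper. First I would reduce irreducibility of $\varphi(t)$ to two kinds of obstructions: $\varphi$ having a root in a small-degree number field (a low-degree factor), and $\varphi$ being a product of two factors of roughly balanced degree. The second obstruction is controlled by a Galois-theoretic input conditional on ERH — one shows, via the Chebotarev density theorem with effective error terms (this is where ERH enters, exactly as in \cite{BV19,Ebe20}), that if the factorization type of $\varphi \bmod p$ is "generic" for enough primes $p$ in a suitable range, then $\varphi$ cannot split into balanced factors over $\mb{Z}$. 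Thus it suffices to show that with probability $1-2\exp(-cn^{1/4})$, the reduction $\varphi(t) \bmod p$ behaves generically (e.g.\ is squarefree with a cycle type forcing a large irreducible factor) simultaneously for a positive-density set of primes $p$ up to $\exp(O(n^{1/4}))$.

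The key reduction is then from this arithmetic statement to a statement about ranks of $M_n - tI_n$ over $\mb{F}_p$ as $t$ ranges over $\mb{F}_p$. Concretely, $\varphi(t) \bmod p$ fails to be squarefree, or has an atypical factorization pattern, only if for some $t \in \ol{\mb{F}_p}$ the matrix $M_n - tI_n$ has corank $\geq 2$ over $\ol{\mb{F}_p}$, or more precisely the sequence of coranks $(\on{corank}_{\mb{F}_p}(M_n - tI_n))_{t}$ deviates from the generic profile. So the heart of the argument is a uniform bound: for each prime $2 < p \leq \exp(O(n^{1/4}))$ and each $t$,
\[
\Pr\big[\on{corank}_{\mb{F}_p}(M_n - tI_n) \geq 2\big] \leq p^{-n + O(1)} \cdot \mr{poly}(n),
\]
together with the companion first-moment bound $\Pr[\on{corank}_{\mb{F}_p}(M_n - tI_n) \geq 1] = (1+o(1))/p + O(1/p^2)$ type estimates, after which a union bound over $t \in \mb{F}_p$ and a (more delicate) handling of $t \in \ol{\mb{F}_p} \setminus \mb{F}_p$ via resultants closes the arithmetic claim, and then a union/second-moment argument over the relevant range of primes $p$ — whose size $\exp(O(n^{1/4}))$ is exactly matched by the $\exp(-cn^{1/4})$ failure probability — finishes. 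Note $M_n - tI_n$ is again a symmetric matrix with i.i.d.\ Rademacher entries off the diagonal and a fixed shift on the diagonal, so these are genuinely rank-distribution statements about shifted symmetric Rademacher matrices over $\mb{F}_p$.

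The main obstacle — and the reason the previous range $p = o(n^{1/8})$ is improved to $\exp(O(n^{1/4}))$ — is proving the corank-$\geq 2$ estimate with a $p^{-n}$-type bound \emph{uniformly} for $p$ this large. The standard inverse Littlewood--Offord machinery over $\mb{F}_p$ only gives structure for vectors $v \in \mb{F}_p^n$ whose atom probability $\rho(v) = \max_a \Pr[\ang{v,\epsilon} = a]$ exceeds $C/p$ for a large constant $C$; for $p$ growing, the "low-anticoncentration" regime $1/p + \Omega(1/p^2) \leq \rho(v) \leq C/p$ is not negligible and must be controlled directly. The plan here is the innovation advertised in the abstract: decompose the kernel vectors by their anticoncentration level, and for the delicate regime combine several inverse Littlewood--Offord-type results — trading off a weaker-than-usual structural conclusion for a strong-enough counting bound — to bound the number of "bad" $v$ with $\rho(v) \geq 1/p + \Omega(1/p^2)$ by roughly $p^{n}/\mr{poly}(n)$ times the per-vector probability $\rho(v)^n$, which suffices for a net/decoupling argument over symmetric matrices (handling the symmetry via the usual trick of conditioning on a principal submatrix and its Schur complement). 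Assembling this with the Fourier-analytic estimate for "unstructured" $v$ (where $\rho(v)$ is close to $1/p$, handled by a direct moment computation on the symmetric quadratic form $\ang{v, M_n v}$) yields the desired uniform corank bound, and hence the theorem.
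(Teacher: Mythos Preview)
Your deduction of irreducibility from the rank distribution is both overcomplicated and contains a wrong quantitative claim. The bound $\Pr[\on{corank}_{\mb{F}_p}(M_n - tI_n)\ge 2]\le p^{-n+O(1)}\cdot\mr{poly}(n)$ is false: by the very rank-distribution formula in \cref{thm:sharp-probability} (and already for the uniform model over $\mb{F}_p$), this probability is $\asymp p^{-3}$, not exponentially small in $n$. Your subsequent plan to handle roots in $\ol{\mb{F}_p}\setminus\mb{F}_p$ via resultants and to run a second-moment argument over primes is not needed.

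The paper's route is the clean first-moment argument of Breuillard--Varj\'u/Eberhard, and it uses only the $k=0$ case of \cref{thm:sharp-probability}. From that case one reads off $\mb{P}[\det(M_n-\lambda I_n)=0\text{ in }\mb{F}_p]=p^{-1}+O(p^{-2})$ uniformly in $\lambda\in\mb{F}_p$ and $2<p\le\exp(\eta n^{1/4})$; summing over $\lambda$ gives $\mb{E}[R_\varphi(p)]=1+O(1/p)$, where $R_\varphi(p)$ counts $\mb{F}_p$-roots of $\varphi$. Under ERH one has the identity $\sum_p R_\varphi(p)\,w_X(\log p)=|\Omega/G|+O(n^3X^2e^{-X/2}\log n)$ (with $\Omega$ the root set and $G$ the Galois group, using the effective prime-ideal theorem for the fields $\mb{Q}(\omega)$). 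Taking expectations and choosing $X\asymp n^{1/4}$ at the top of the admissible range yields $\mb{E}|\Omega/G|=1+O(\exp(-cn^{1/4}))$, and since $|\Omega/G|\ge 1$ deterministically, Markov gives $\mb{P}[|\Omega/G|>1]=O(\exp(-cn^{1/4}))$. Finally, $|\Omega/G|=1$ only says $\varphi$ is a power of an irreducible; proper powers are ruled out by the separate fact that $M_n$ has simple spectrum with probability $1-\exp(-\Omega(\sqrt{n}))$. No factorization-type analysis, no $\ol{\mb{F}_p}$-roots, and no corank-$\ge 2$ bound enter this deduction.

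Your final paragraph is really a sketch of how to \emph{prove} \cref{thm:sharp-probability}, not how to deduce \cref{thm:main} from it. The spirit (controlling vectors with anticoncentration in $[1/p+\Omega(1/p^2),\,C/p]$) is right, but the mechanism is different: the paper shows that every almost-kernel vector of $M_n-\lambda I_n$ has $\Omega(n/\log p)$ \emph{disjoint} blocks of size $O(\log p)$ each with discrepancy $\le C/p^2$ (\cref{prop:structure-many}), obtained by a volume count on the set of ``bad'' blocks. This structure is then fed into a row-by-row corank walk (\cref{lem:walk-transition}), handling corank increase via decoupling of the quadratic form $\phi^T B^{-1}\phi$, rather than a direct ``$p^n/\mr{poly}(n)$ times $\rho(v)^n$'' union bound or a moment computation on $\langle v,M_n v\rangle$.
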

\begin{remark}
The proof in this paper can easily be extended to handle the class of $\alpha$-balanced distributions considered in \cite{Ebe20} with straightforward modifications; we leave the details to the interested reader. 
\end{remark}

It was noted in \cite{Ebe20} that given the techniques in \cite{BV19,Ebe20}, \cref{thm:main} (with the weaker probability bound $1-o_n(1)$) can be deduced from the following universality statement: the probability that $tI_n - M_{n}$ is invertible over $\mb{F}_p$, for $p = n^{\Omega(1)}$, is essentially the same as for an $n\times n$ symmetric matrix whose entries on and above the diagonal are sampled from the uniform distribution on $\mb{F}_p$. Despite the intensive efforts to study the singularity probability of symmetric Rademacher matrices (\cite{CTV06, Ngu11, Ver14, FJ19, CMMM20, JSS20a, CJMS20} and especially the recent breakthrough \cite{CJMS21} which confirms the long-standing conjecture that the singularity probability of symmetric Rademacher matrices is exponentially small), a result of this precision has remained elusive. While for $p = o(n^{1/8})$, such a result is known due to work of Maples \cite{Map13}, the bound on $p$ is too restrictive to imply \cref{thm:main} (even with the weaker probability $1-o_n(1)$). The main challenge in addressing the regime $p = \omega(n^{1/2})$ is that one must consider the arithmetic structure of vectors which are orthogonal to random subspaces of small co-dimension. However, inverse Littlewood--Offord type theorems (cf.~\cite{Tao06, RV08, FJLS2018}), which have been designed to study only arithmetically structured vectors, fail to apply to vectors in $\mb{F}_{p}$ with anticoncentration at least $C/p$ for some large constant $C > 1$. While consideration of vectors with anticoncentration at most $C/p$ is inessential for the less precise results mentioned above, here we must provide an appropriate structural result for vectors with anticoncentration at least $1/p + \Omega(1/p^2)$, say. This is accomplished in the key \cref{prop:structure-many}. Finally, we note that an upper bound on the singularity probability of the form $1/p + O(1/p^2)$ can be deduced for $p=o(n^{1/2})$ from estimates on the expected size of the kernel over $\mb{F}_p$ due to \cite{Fer20}, but for similar reasons to those mentioned above these estimates do not appear to extend to $p$ which is larger than a small polynomial.

An obvious generalization of studying the probability of singularity of $M_n$ over $\mb{F}_p$ is studying the rank distribution of $M_n$ over $\mb{F}_p$. For symmetric Rademacher matrices, the only prior work we are aware of is the aforementioned work of Maples \cite{Map13}, which effectively requires $p = o(n^{1/8})$. Results for unrestricted $p$ are available under the very strong assumption that the independent entries of $M_n$ are uniformly \cite{FG15} or nearly-uniformly \cite{koenig2020rank} distributed over $\mb{F}_p$. 

The main innovation of our work is the following result regarding the rank distribution of symmetric Rademacher matrices (and diagonal perturbations) over $\mb{F}_p$ for all $2 < p \leq \exp(\eta n^{1/4})$: 
\begin{theorem}\label{thm:sharp-probability}
There exists $\eta > 0$ so that for any $2 < p\le\exp(\eta n^{1/4})$ and for any $\lambda\in\mb{F}_p$, the $n\times n$ symmetric Rademacher matrix $M_n$ satisfies
\[\mb{P}[\on{rank}_{\mb{F}_p}(M_n-\lambda I_n) = n-k] = \frac{\prod_{i=0}^{\infty}(1-p^{-(2i+1)})}{\prod_{i=1}^{k}(p^i-1)} + O(\exp(-\eta n/\log p)).\]
\end{theorem}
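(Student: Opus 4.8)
The plan is to compute the rank distribution by a second-moment / inclusion type argument on the number of vectors in the kernel, leveraging the sharp singularity-type bounds that $\mathbb{P}[v \in \ker(M_n - \lambda I_n)] \le 1/p + O(1/p^2)$ provided by the inverse Littlewood--Offord machinery (\cref{prop:structure-many}) together with a first-moment computation of $\mathbb{E}[|\ker_{\mathbb{F}_p}(M_n - \lambda I_n)|]$. Write $A = M_n - \lambda I_n$, which is still a symmetric matrix with independent Rademacher entries on and above the diagonal (shifted on the diagonal); note $A$ is symmetric, so $\mathrm{rank}(A) = n-k$ means $\dim\ker A = k$ and the kernel is a random $k$-dimensional subspace of $\mathbb{F}_p^n$. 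First I would establish, via the anticoncentration input, that for a \emph{fixed} nonzero $v$, $\mathbb{P}[Av = 0] = p^{-n} \cdot (1 + o(1))$ up to the error $\exp(-\eta n/\log p)$ \emph{unless} $v$ lies in a negligible ``structured'' set; this is exactly the regime $1/p + \Omega(1/p^2)$ that the paper emphasizes it can now handle. Summing over $v$ gives $\mathbb{E}[|\ker A| - 1] = (p^n - O(\text{structured}))\cdot p^{-n}(1+o(1)) \approx 1$, and more generally the $j$-th factorial-moment-type quantity $\mathbb{E}\big[\binom{|\ker A|-1}{\,\cdot\,}\big]$ matches the uniform model.

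The cleanest route is the \emph{method of moments on the dimension}: let $X_k = \mathbb{P}[\dim\ker A = k]$ and compute $\mathbb{E}[p^{\dim\ker A}] = \mathbb{E}[|\ker A|] = \sum_{v} \mathbb{P}[Av=0]$, and then the higher ``binomial'' moments $\mathbb{E}\big[\binom{p^{\dim\ker A}-1}{1}\big], \mathbb{E}\big[\binom{(p^{\dim\ker A}-1)(p^{\dim\ker A}-p)}{\cdots}\big]$ by summing $\mathbb{P}[Av_1 = \dots = Av_j = 0]$ over $j$-tuples of independent vectors (equivalently summing over $j$-dimensional subspaces $W$ the probability that $W \subseteq \ker A$). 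For each fixed $W$ of dimension $j \le n - \Theta(n)$, the probability that the columns of $A$ all annihilate $W$ should again be $p^{-nj}(1 + O(\exp(-\eta n/\log p)))$ off a structured exceptional set, because restricting to $W$ imposes $j$ independent anticoncentration constraints per row and one controls the structured tuples by a union bound with the strong exponential savings. Summing over all $W$ reproduces, to within $O(\exp(-\eta n/\log p))$, the corresponding moments of the limiting Cohen--Lenstra-type distribution $\mathbb{P}[\dim = k] = \prod_{i\ge 0}(1 - p^{-(2i+1)}) / \prod_{i=1}^{k}(p^i - 1)$ — which, as is classical, is determined by (and has) exactly these moments $\sum_k \prod_{i=1}^{j}(p^k - p^{i-1}) \cdot X_k = 1$ for each $j \ge 0$. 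Inverting this finite triangular linear system (the moments decay fast enough that only $O(\log_p(1/\varepsilon))$ of them are needed to pin down $X_k$ to accuracy $\varepsilon = \exp(-\eta n/\log p)$) yields the stated formula.

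The main obstacle, and where essentially all the work lies, is the uniform-in-$p$ anticoncentration estimate controlling the contribution of \emph{structured} vectors and $j$-tuples for $p$ as large as $\exp(\eta n^{1/4})$: one needs that the total mass of $v$ (resp.\ $W$) with $\mathbb{P}[Av = 0] \ge p^{-n}(1 + \Omega(1/p))$ is at most $p^{-n}\exp(-\eta n/\log p)$ (resp.\ the $j$-fold analogue), so that the exceptional sets contribute below the target error even after multiplying by the number of subspaces ($\le p^{nj}$). This is precisely the content flagged as \cref{prop:structure-many}, combining several inverse Littlewood--Offord theorems to descend below the classical $C/p$ barrier; I would invoke it as a black box. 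A secondary technical point is handling the ``bulk'' range of intermediate-dimensional $W$ (codimension neither very small nor close to $n$) where one cannot afford a naive union bound — here one argues that $\dim\ker A$ is concentrated near $0$ (it is $O(1)$ with overwhelming probability, which already follows from the $1/p + O(1/p^2)$ singularity bound iterated), so those terms are absorbed into the error and only $k = O(1)$ matters. Finally one checks that the combinatorial identity expressing the Cohen--Lenstra distribution through its moments matches $\prod_{i=0}^{\infty}(1-p^{-(2i+1)})/\prod_{i=1}^{k}(p^i-1)$, which is a standard $q$-series computation.
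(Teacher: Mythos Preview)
Your approach---computing Gaussian-binomial moments $\mb{E}\prod_{i=0}^{j-1}(p^{\dim\ker A}-p^i)$ by summing $\mb{P}[W\subseteq\ker A]$ over $j$-dimensional subspaces and matching to the limiting distribution---is a genuinely different route from the paper's. The paper instead runs a \emph{corank walk}: it reveals $A_t=M_t-\lambda I_t$ one ``reverse-L'' at a time, shows (\cref{lem:walk-transition}) that the transition probabilities of $\on{corank}A_t$ match those of the uniform symmetric model up to $O(\exp(-\Omega(t/\log p)))$, uses a hitting-time argument to reach corank $0$ by time $n/2$, and from there couples to the uniform symmetric model whose rank distribution is known exactly \cite{FG15}. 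The structure theorem \cref{prop:structure-many} is used not to bound $\mb{P}[Av=0]$ for fixed $v$, but to guarantee that the (random) kernel and near-kernel vectors of $A_t$ are unstructured, so that the \emph{next} reverse-L interacts with them in the right way.

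Your sketch has a real gap at the point where you invoke \cref{prop:structure-many}. You assert that for unstructured $v$ one has $\mb{P}[Av=0]=p^{-n}(1+o(1))$, and that ``restricting to $W$ imposes $j$ independent anticoncentration constraints per row.'' But $A$ is \emph{symmetric}, so its rows share entries and are not independent; $\mb{P}[Av=0]$ is not a product of row-wise atom probabilities, and \cref{prop:structure-many} only controls $\rho_{\mb{F}_p}(\bs{v}|_S)$ for an i.i.d.\ Rademacher inner product. Indeed the only pointwise bound available in the paper for a fixed unstructured $v$ is \cref{eqn:crude}, namely $\le C^n p^{-n}$, which is useless for a moment sum over $p^n$ vectors. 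The symmetry forces a genuinely quadratic object---in the paper's setup the form $\phi^TB^{-1}\phi$---and this is the entire difficulty; the paper handles it by a decoupling trick (Step~2 in the proof of \cref{lem:walk-transition}), which also explains why \cref{prop:structure-many} is stated for $(n/\log p)$-\emph{almost}-kernel vectors and with the flexibility to avoid an arbitrary set $T$ of size $n/4$. Your proposal never engages with this quadratic structure, so the moment computation does not go through as written. A moment-method proof for symmetric matrices is viable in principle (cf.\ the cokernel literature), but would still require a quadratic anticoncentration input of comparable strength, not just the linear statement of \cref{prop:structure-many}.
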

\begin{remark}
The proof can be extended routinely to the class of $\alpha$-balanced distributions; however, for the sake of brevity, we have restricted our attention to the Rademacher distribution. 
\end{remark}
\begin{remark}
\cref{thm:sharp-probability} is the natural symmetric analog of the results in \cite{Map10,LMN19}. As mentioned above, a version was known for $p$ sufficiently small (with weaker error terms) due to Maples \cite{Map13}.
\end{remark}

\subsection{Organization}\label{sub:organization}
The remainder of this paper is organized as follows. In \cref{sec:structure}, we prove our key structural result (\cref{prop:structure-many}) for vectors which are orthogonal to many rows of $M_n$. In \cref{sec:result}, we use this, along with arguments in \cite{Map13,koenig2020rank} to deduce \cref{thm:sharp-probability}. \cref{sec:nt} contains the deduction of \cref{thm:main} from the $k=1$ case of \cref{thm:sharp-probability}, following the arguments in \cite{Ebe20}. Finally, \cref{sec:app-proof-structure-1} contains the proof of a `crude' structure theorem (which appears in \cite{FJ19}, but with worse parameters) for the reader's convenience.  



\section{Structure Theorem for Almost-Kernel Vectors}\label{sec:structure}

We begin by showing the easy fact that, except with exponentially small probability, no sparse vector has sparse image under $M$. 

\begin{definition}
Let $0 \leq r \leq n$ be a parameter. We say that $\bs{v}\in\mb{F}_p^n$ is an $r$-kernel vector of a matrix $M\in\mb{F}_p^{n\times n}$ if $M\bs{v}$ is $r$-sparse. 
\end{definition}
In particular, $0$-kernel vectors correspond to the usual right-kernel of $M$. 

\begin{lemma}\label{lem:not-sparse}
Let $p \ge 3$. With probability at least $1 - \exp(-n/6)$, the symmetric random matrix $M_n - \lambda I_n$ has no $n/(16\log p)$-sparse $n/4$-kernel vectors in $\mb{F}_p^n$.
\end{lemma}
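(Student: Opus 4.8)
The plan is to prove this via a union bound over all pairs $(\bs{v}, \bs{w})$, where $\bs{v}$ is the hypothetical $n/(16\log p)$-sparse vector and $\bs{w} = (M_n - \lambda I_n)\bs{v}$ is the hypothetical $n/4$-sparse image. The key observation is that once we fix the supports $S = \operatorname{supp}(\bs{v})$ (of size $s \le n/(16\log p)$) and $T = \operatorname{supp}(\bs{w})$ (of size $t \le n/4$), the requirement $(M_n - \lambda I_n)\bs{v} = \bs{w}$ imposes, for each row index $i \notin S \cup T$, the linear constraint $\sum_{j \in S} (M_n)_{ij} v_j = 0$, where the $v_j$ are fixed nonzero elements of $\mb{F}_p$. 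For such a row $i$, the entries $(M_n)_{ij}$ with $j \in S$ are independent Rademacher variables (they lie strictly below or above the diagonal relative to the constraint, but in any case are among the independent entries), and are independent across different such rows $i$. So each constraint is satisfied with probability $\max_{a \in \mb{F}_p} \mb{P}[\sum_{j \in S} (M_n)_{ij} v_j = a] \le 1/2$, by the standard fact that a nonempty $\pm 1$ combination has point probabilities at most $1/2$.

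Carrying this out: there are at least $n - |S| - |T| \ge n - n/(16\log p) - n/4 \ge n/2$ rows $i$ outside $S \cup T$, and the corresponding constraints are mutually independent, so for fixed $(\bs{v}, \bs{w})$ with the given support sizes we get $\mb{P}[(M_n - \lambda I_n)\bs{v} = \bs{w}] \le 2^{-n/2}$. (One should be slightly careful that changing $\bs{v}$ to a different vector with the same support does not exhaust the randomness, but since we union bound over all $\bs{v}$ this is fine.) Now we count: the number of choices of $S$ is $\sum_{s \le n/(16\log p)} \binom{n}{s} \le \exp(O(n \log\log p / \log p)) = \exp(o(n))$ when we are careful, but more simply we can afford $\binom{n}{\le n/(16\log p)} \le p^{n/(16\log p)} \cdot (\text{poly}) = \exp(n/16 + o(n))$ using $\binom{n}{k} \le (en/k)^k$ with $k = n/(16\log p)$, giving roughly $\exp((n/(16\log p)) \log(16 e \log p)) = \exp(o(n))$; the number of choices of $T$ is $\binom{n}{\le n/4} \le 2^n$; and for each fixed pair of supports, the number of vectors $\bs{v}$ supported on $S$ is $p^{|S|} \le p^{n/(16\log p)} = \exp(n/16)$, while $\bs{w}$ is then forced on the coordinates outside... actually $\bs{w}$ ranges over $p^{|T|}$ values but we don't need to enumerate it separately since $\bs{w} = (M_n-\lambda I_n)\bs{v}$ is determined once we ask the event — we only need that it be $n/4$-sparse, so we enumerate its support $T$ but not its values. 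Thus the total union bound is at most $\exp(n/16) \cdot 2^n \cdot \exp(n/16) \cdot 2^{-n/2}$, which is not obviously small — so I need to be sharper.

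The fix, which I expect to be the main technical point, is to not waste a factor $2^{-n/2}$ uniformly but rather to exploit that we have far more than $n/2$ independent constraints and we only need to beat the entropy of the supports, not of $\bs{v}$ itself: reorganize the union bound by first summing over $\bs{v}$ (factor $\exp(n/16)$ from the support count times $p$-many values, i.e.\ $\le \exp(n/16) \cdot \exp(n \log p / (16 \log p)) = \exp(n/8)$ — wait, $p^{n/(16\log p)} = e^{n/16}$, good, so $\exp(n/16)$ total for choosing $\bs{v}$ including its values since $\binom{n}{s} \le e^{o(n)}$), then for the image we only sum over the support $T$ of size $\le n/4$, costing $\binom{n}{\le n/4} \le 2^{H(1/4) n} \le 2^{0.82 n} = \exp(0.57 n)$. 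Against this total entropy $\exp(n/16 + 0.57n) \approx \exp(0.63n)$ we have the probability bound $2^{-(3/4 - o(1))n} = \exp(-0.52 n + o(n))$ — still not quite enough. The genuinely sharp route is to observe that the independent constraints number at least $n - |S| - |T|$, but when $|T|$ is large we are also forcing $\bs{w}$ to vanish on many coordinates, and we should instead bound $\mb{P}[(M_n - \lambda I_n)\bs{v} \text{ is } n/4\text{-sparse}]$ directly for fixed $\bs{v}$: the coordinates $((M_n - \lambda I_n)\bs{v})_i$ for $i \notin S$ are independent with point-probabilities $\le 1/2$, so the probability that at least $n - |S| - n/4 \ge n/2$ of them vanish is, by a union bound over which $\le n/4$ of these $\ge n - |S|$ coordinates are nonzero, at most $\binom{n}{n/4} (1/2)^{n - |S| - n/4} \le 2^{0.82n} \cdot 2^{-(1/2 - o(1))n}$. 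Combined with the $\exp(n/16)$ choices for $\bs{v}$ this gives $\exp((\ln 2)(0.82 - 0.5 + 1/(16\ln 2))n + o(n))$; tuning the sparsity constants ($16$, $1/4$) — which is exactly why the statement uses $16\log p$ and $n/4$ rather than rounder numbers — makes the exponent at most $-n/6$. So the real content is just this constant-chasing with Chernoff/entropy bounds; the structural input is entirely the trivial $\le 1/2$ anticoncentration of a nonempty Rademacher sum, and the symmetry of $M_n$ causes no difficulty since for each fixed row $i \notin S$ the relevant entries $(M_n)_{ij}$, $j \in S$, together with the diagonal are jointly independent of each other and, crucially, the randomness used for distinct rows $i, i' \notin S \cup \{i,i'\}$ overlaps only in the single entry $(M_n)_{ii'}$ — so I would restrict to an independent set of rows (e.g.\ the even-indexed ones, or handle the pairing $\{i,i'\}$ directly) to recover genuine independence, losing at most a factor of $2$ in the exponent, which the slack in the constants absorbs.
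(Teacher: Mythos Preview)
Your approach is essentially the paper's: fix a sparse $\bs{v}$, bound the probability that $(M_n-\lambda I_n)\bs{v}$ is $n/4$-sparse via the trivial $\le 1/2$ anticoncentration of a nonempty Rademacher sum, then union-bound over all $n/(16\log p)$-sparse $\bs{v}$.

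One correction, though: your concern about dependence between rows is misplaced. For $i \notin S = \on{supp}(\bs{v})$, the coordinate $((M_n-\lambda I_n)\bs{v})_i = \sum_{j \in S} (M_n)_{ij} v_j$ involves only entries $(M_n)_{ij}$ with $j \in S$. For distinct $i, i' \notin S$ these entry sets are disjoint: the shared entry $(M_n)_{ii'}$ you worry about has $i' \notin S$, hence $v_{i'} = 0$, so it contributes to neither dot product. Thus the $n - |S|$ coordinates with $i \notin S$ are already genuinely independent, and your proposed restriction to even-indexed rows is both unnecessary and (as stated) ineffective. The paper handles the symmetry by a slightly different device: fix a nonzero coordinate, say $v_1 \neq 0$, reveal the rows $R_n, R_{n-1}, \ldots, R_1$ in order, and observe that $M_{i1}$ is fresh conditionally on $R_{i+1},\ldots,R_n$, giving $\mb{P}[R_i\cdot\bs{v}=0\mid R_{i+1},\ldots,R_n]\le 1/2$ for every $i$. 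This uses all $n$ rows rather than $n-|S|$, but since $|S|\le n/(16\log p)$ the difference is immaterial.
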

\begin{proof}
Fix a non-zero $\bs{v} \in \mb{F}_p^n$ with $v_1 \neq 0$. We begin by computing the probability that $M\bs{v}$ is $n/4$-sparse. We denote the rows of $M$ by $R_1,\dots, R_n$ and reveal the rows from bottom-to-top. Since the first entry $R_i$ is independent $R_{i+1},\dots, R_n$, since $v_1 \neq 0$, and since $p\geq 3$, it follows that 
\[\max_{R_{i+1},\dots, R_{n}}\mb{P}[R_i\cdot \bs{v} = 0 \mid R_{i+1},\dots, R_{n}] \leq \frac{1}{2}.\]
Therefore, the probability that $M\bs{v}$ is $n/4$-sparse is at most
\[\frac{1}{2^{n-n/4}}\cdot \binom{n}{n/4}.\]
Finally, taking the union bound over the at most $$\exp(nH(1/(16\log p))p^{n/(8\log p)}$$ choices of $n/(16\log{p})$-sparse vectors in $\mb{F}_{p}^{n}$ gives the desired conclusion.
\end{proof}

We recall the definition of the atom probability of a vector $\bs{v} \in \mb{F}_{p}^{n}$ with respect to Rademacher random variables. 

\begin{definition}
The atom probability of a vector $\bs{v} \in \mb{F}_{p}^{n}$ is defined as
\[\rho_{\mb{F}_p}(\bs{v}) = \max_{r \in \mb{F}_p}\mb{P}[\xi_1 v_1 + \dots + \xi_n v_n = r],\]
where $\xi_1,\dots, \xi_n$ are i.i.d.~Rademacher random variables. 
\end{definition}

We will need the following `crude' structure theorem for $(n-n/\log p)$-kernel vectors, which follows from a more careful version of the argument in \cite{FJ19}. For the reader's convenience, we include details in \cref{sec:app-proof-structure-1}. 

\begin{proposition}\label{prop:structure-1}
Suppose that $2 < p\le\exp(c_{\ref{prop:structure-1}}n^{1/4})$ and fix $\lambda\in\mb{F}_p$. With probability at least $1-\exp(-n/8)$, every vector $\bs{v} \in \mb{F}_p^{n}$ which is orthogonal to at least $n-n/\log p$ rows of the random symmetric matrix $M_n-\lambda I_n$ over $\mb{F}_p$ satisfies the following two properties:
\begin{itemize}
\item $|\on{supp}(\bs{v})| \geq n/(16\log{p})$, and  
\item There exists $S\subseteq\on{supp}(\bs{v})$ with $|S| \in [\sqrt{n\log n},n^{3/4}]$ such that
\[\rho_{\mb{F}_p}(\bs{v}|_S)\le\frac{C_{\ref{prop:structure-1}}}{p}.\]
\end{itemize}
\end{proposition}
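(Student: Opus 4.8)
The plan is to prove the two properties separately, since the first is immediate from \cref{lem:not-sparse} and the real content is the anticoncentration statement in the second bullet. For the first bullet, observe that if $\bs{v}$ is orthogonal to at least $n - n/\log p$ rows of $M_n - \lambda I_n$, then $(M_n - \lambda I_n)\bs{v}$ is $(n/\log p)$-sparse, i.e.\ $\bs{v}$ is an $(n/\log p)$-kernel vector, hence certainly an $(n/4)$-kernel vector. By \cref{lem:not-sparse}, with probability at least $1 - \exp(-n/6)$ there are no $n/(16\log p)$-sparse $(n/4)$-kernel vectors, so on this event every such $\bs{v}$ satisfies $|\on{supp}(\bs{v})| \geq n/(16\log p)$. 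This disposes of the first item and, importantly, lets us assume from now on that $\bs{v}$ has large support.

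For the second bullet I would run a standard net/union-bound argument against the ``unstructured'' vectors. Define the bad set $\mc{B}$ to be the vectors $\bs{v} \in \mb{F}_p^n$ with $|\on{supp}(\bs{v})| \geq n/(16\log p)$ for which \emph{every} subset $S \subseteq \on{supp}(\bs{v})$ of size in $[\sqrt{n\log n}, n^{3/4}]$ has $\rho_{\mb{F}_p}(\bs{v}|_S) > C/p$; we want to show that with the stated probability no vector of $\mc{B}$ is orthogonal to $n - n/\log p$ rows. Since $\bs{v}$ has support of size $\geq n/(16\log p) \gg n^{3/4}$, we may greedily extract many disjoint blocks $S_1, S_2, \dots, S_m \subseteq \on{supp}(\bs{v})$ each of size exactly $\lceil \sqrt{n\log n}\rceil$ (with $m = \Theta(\sqrt{n/\log n}/\log p)$ of them), and on each block the high atom probability $\rho_{\mb{F}_p}(\bs{v}|_{S_j}) > C/p$ forces $\bs{v}|_{S_j}$ to lie in an explicit small structured family: by the classical combinatorial Erd\H{o}s--Littlewood--Offord / Halász-type bound (or the crude inverse theorem from \cref{sec:app-proof-structure-1}), the number of $t$-dimensional vectors with atom probability exceeding $C/p$ is at most roughly $(C'/p)\cdot(\text{number of all vectors})$-type savings per block, or more simply can be covered by a net of size $p^{t}/p^{\Omega(1)}$ — the key point being a genuine $p$-power saving per block relative to the trivial $p^t$. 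Multiplying the savings over the $m$ disjoint blocks yields a net for $\mc{B}$ of size at most $p^n \cdot p^{-\Omega(m)} = p^n\exp(-\Omega(m\log p))$.

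Against each fixed $\bs{v}$ in this net one estimates the probability that $\bs{v}$ is orthogonal to $n - n/\log p$ rows: revealing the rows one at a time (bottom-to-top, using symmetry so that the relevant entry is independent of the previously revealed rows and pairing it with the anticoncentration on a block contained in the unrevealed coordinates), each row is orthogonal to $\bs{v}$ with conditional probability at most $\rho_{\mb{F}_p}(\bs{v}|_{S_j}) \leq 1 - c$ for an appropriate block, so the probability that $\bs{v}$ is orthogonal to all but $n/\log p$ rows is at most $\binom{n}{n/\log p}(1-c)^{n - n/\log p} \leq \exp(-cn/2)$ for $p$ not too large. Combining with the net bound, the total failure probability is at most $\exp(-\Omega(n)) + p^n \exp(-\Omega(m\log p))\exp(-cn/2)$; here the constraint $p \leq \exp(c_{\ref{prop:structure-1}} n^{1/4})$ enters precisely to ensure $n\log p$ (from the net's $p^n$ factor) is dominated by $\exp(-cn/2)$, giving overall probability at least $1 - \exp(-n/8)$.

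The main obstacle I expect is obtaining the per-block $p$-power saving uniformly in $p$ in the regime where $p$ can be nearly exponential in $n^{1/4}$: one must invoke the counting/structure theorem of \cref{sec:app-proof-structure-1} with parameters strong enough that the savings $\exp(-\Omega(m\log p))$ genuinely beat the entropy cost $p^n = \exp(n\log p)$ of specifying the complementary coordinates, and also control the $p^{n/(8\log p)}$-type overhead from small-support vectors as in \cref{lem:not-sparse}. Balancing the block size $\sqrt{n\log n}$, the number of blocks $m$, and the permissible range of $p$ so that all error terms close is the delicate point; everything else is routine net-plus-union-bound bookkeeping.
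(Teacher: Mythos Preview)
Your handling of the first bullet via \cref{lem:not-sparse} is fine and matches the paper. The second bullet, however, has a genuine gap: the union bound you sketch does not close for any unbounded $p$.

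The core issue is the tension between the count of bad vectors and the per-vector probability bound. You bound the probability that a fixed $\bs{v}\in\mc{B}$ is orthogonal to $n-n/\log p$ rows by $\exp(-cn/2)$, using only the trivial estimate $\rho_{\mb{F}_p}(\bs{v}|_{S_j})\le 1-c$ (which comes from $S_j$ being nonzero). But your net for $\mc{B}$ still has size essentially $p^n$ times a modest saving, and $p^n\exp(-cn/2)=\exp(n(\log p - c/2))$ blows up the moment $\log p>c/2$. Your claim that ``the constraint $p\le\exp(c_{\ref{prop:structure-1}}n^{1/4})$ enters precisely to ensure $n\log p$ is dominated by $\exp(-cn/2)$'' is simply false: for such $p$ one has $n\log p$ of order $n^{5/4}$, not $n$. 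The per-block saving of $p^{\Omega(1)}$ you propose, even if correct, yields a total saving of $\exp(\Omega(m\log p))=\exp(\Omega(\sqrt{n/\log n}))$, which is negligible against $p^n$.

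What the paper does instead (and what your sketch is missing) is a \emph{stratification by anticoncentration level}: one partitions the would-be bad vectors dyadically according to a parameter $t$ governing $R_k^\ast$, so that at level $t$ one simultaneously has (i) a count of roughly $p^n t^{-\Omega(n)}$ via the counting inverse Littlewood--Offord theorem, and (ii) the existence of \emph{some} subset $S$ with $\rho_{\mb{F}_p}(\bs{v}|_S)\le Ct/(p\sqrt{L})$ via Hal\'asz's inequality, giving a per-vector probability bound of $(Ct/p)^{n-o(n)}$. The crucial $p^{-n}$ in the probability cancels the $p^n$ in the count, and what remains is controlled by the extra $L^{-n/2}$ and $t^{-\Omega(n)}$ factors. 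Without linking count and probability through a common parameter like $t$, you cannot beat the $p^n$ entropy cost for $p$ beyond a fixed constant.
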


Since 
\[\rho_{\mb{F}_p}(\bs{v}) \leq \rho_{\mb{F}_p}(\bs{v}|_S)\]
for any $S\subseteq [n]$, the crude structure theorem shows that any $(n/\log p)$-kernel vector of $M_n - \lambda I_n$ has atom probability at most $\frac{C_{\ref{prop:structure-1}}}{p}$. While this result is optimal up to the constant $C_{\ref{prop:structure-1}}$, it is unfortunately insufficient for our application. The next proposition, which is one of the main innovations of this paper, allows us to show that any $(n/\log p)$-kernel vector of $M_{n}-\lambda I_n$ has many disjoint chunks with atom-probability approximately $1/p$.

\begin{proposition}\label{prop:structure-many}
Suppose that $2 < p\le\exp(c_{\ref{prop:structure-many}}n^{1/4})$ and fix $\lambda\in\mb{F}_p$. Let $m = C_{\ref{prop:structure-many}}\log p$. With probability at least $1-\exp(-n/9)$, every vector $\bs{v}$ orthogonal to at least $n-n/\log p$ rows of the random symmetric matrix $M_n-\lambda I_n$ over $\mb{F}_p$ has the following property, which we denote by \emph{(}$\dagger$\emph{)}: given any set $T\subseteq[n]$ of size $n/4$, there at least $n/(2m)$ disjoint sets $S$ of size $m$ in $[n]\setminus T$ such that
\[\on{disc}_{\mb{F}_p}(\bs{v}|_S) = \sup_{x\in\mb{F}_p}\bigg|\mb{P}_\xi\bigg[\sum_{i\in S}\xi_iv_i = x\bigg]-\frac{1}{p}\bigg|\le\frac{C_{\ref{prop:structure-many}}}{p^2}.\]
\end{proposition}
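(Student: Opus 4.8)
The plan is to bootstrap from the crude structure theorem (\cref{prop:structure-1}) together with the no-sparse-image lemma (\cref{lem:not-sparse}), using a union-bound / conditioning argument over the rows outside $T$. First I would fix a set $T\subseteq[n]$ of size $n/4$ and work with the rows $R_i$, $i\notin T$, of $M_n-\lambda I_n$. The key point is that for a \emph{fixed} vector $\bs v$, the events ``$R_i\cdot\bs v = $ something small'' are essentially independent across $i$ (the diagonal entry $M_{ii}$ only enters row $i$, and the above-diagonal entries couple only pairs), so one can estimate the probability that \emph{few} rows outside $T$ give a chunk of $\bs v$ with small discrepancy. Concretely, I would condition on the entries of $M_n$ lying in $T\times T$ and on all entries below the diagonal, revealing the rows $R_i$ ($i\notin T$) one at a time from bottom to top; at each step the fresh randomness is an almost-fresh Rademacher vector supported on a set of size $\ge n - |T| - (\text{number of rows already revealed})\ge n/4$.

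The heart of the argument is a \emph{local} anticoncentration estimate: if $\bs v$ is such that $\rho_{\mb F_p}(\bs v|_S)\le C_{\ref{prop:structure-1}}/p$ for some $S$ of size between $\sqrt{n\log n}$ and $n^{3/4}$ (which \cref{prop:structure-1} guarantees), then a random $m$-subset of $S$ has discrepancy $\le C/p^2$ with probability bounded below by an absolute constant $\delta>0$, provided $m=C_{\ref{prop:structure-many}}\log p$ is chosen large enough. This is a Fourier/convolution computation: $\operatorname{disc}_{\mb F_p}(\bs v|_S)$ is controlled by $\sum_{t\ne 0}\prod_{i\in S}|\cos(\pi t v_i/p)|$, and having $\rho_{\mb F_p}(\bs v|_S)$ close to $1/p$ forces, for each frequency $t\ne 0$, the product over $S$ to be small; an averaging argument (Markov on a random subset, using that $\prod_{i\in S}$ over a uniformly random $m$-subset has the right expectation, together with a union bound over the $p-1$ frequencies that is affordable because each frequency contributes geometrically in $m/\log p$) then shows a positive-probability $m$-subset works. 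Equivalently, I would extract from $S$ a ``large'' chunk that is itself $\le C/p$ in atom probability and of size $\Theta(\log p)$, then iterate: after removing one good chunk of size $m$, the remaining part of $\bs v$ restricted to $[n]\setminus T$ still satisfies the crude structure hypothesis (one needs a version of \cref{prop:structure-1} that produces such an $S$ inside any large coordinate subset, which the same proof gives), so one can repeat $\Theta(n/m)$ times to harvest $n/(2m)$ disjoint good chunks.

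Then I would run the union bound over $\bs v$. Since the bad event is ``$\bs v$ is orthogonal to $\ge n-n/\log p$ rows but has fewer than $n/(2m)$ good disjoint $m$-chunks outside $T$,'' I would split on the structure of $\bs v$ coming from \cref{prop:structure-1}: conditionally on the low-probability event from that proposition failing being excluded, every such $\bs v$ has a witnessing set $S$, and there are at most $\binom{n}{|S|}p^{|S|}\le\exp(O(n^{3/4}\log p))$ choices of $(S,\bs v|_S)$; for each, the probability over the rows outside $T$ that the iterative harvesting fails to produce $n/(2m)$ chunks is at most $\exp(-\Omega(n/\log p))$ by a standard concentration (each of $\Omega(n/m)$ rounds independently succeeds with probability $\ge\delta$, so by a Chernoff bound the number of successes is $\ge n/(2m)$ except with exponentially small probability). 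Since $\exp(O(n^{3/4}\log p))\cdot\exp(-\Omega(n/\log p)) = \exp(-\Omega(n/\log p))$ as long as $p\le\exp(c n^{1/4})$, and there are at most $\binom{n}{n/4}\le 2^n$ choices of $T$, a final union bound gives the claimed $1-\exp(-n/9)$ (absorbing also the failure probabilities of \cref{lem:not-sparse} and \cref{prop:structure-1}).

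The main obstacle I expect is making the harvesting step genuinely use \emph{fresh} randomness at each round while $\bs v$ stays fixed: one has to be careful that conditioning on earlier rows (to decide whether a chunk is ``good'') does not destroy the independence needed to lower-bound the success probability of the next round, and that the ``remaining'' vector after deleting harvested coordinates still meets the hypothesis of the crude structure theorem on the shrunken support. I would handle this by noting that ``goodness of chunk $S$'' is a deterministic property of $\bs v|_S$ alone — it does not depend on the matrix at all — so the only randomness in the whole argument is in \emph{which} $\bs v$ can arise as an almost-kernel vector; the matrix-probability estimate is then really just the crude structure theorem applied inside $[n]\setminus T$ (plus \cref{lem:not-sparse} to rule out sparse $\bs v$), and the combinatorial extraction of $n/(2m)$ disjoint good chunks from the guaranteed large low-atom-probability set $S$ is a purely deterministic lemma about vectors over $\mb F_p$, proved by the Fourier averaging sketched above. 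That deterministic extraction lemma — quantifying how a size-$\Theta(\log p)$ random subchunk of a set with atom probability $\le C/p$ inherits discrepancy $\le C'/p^2$ — is where the real work lies and where the precise choice of the constant $C_{\ref{prop:structure-many}}$ relative to $C_{\ref{prop:structure-1}}$ gets pinned down.
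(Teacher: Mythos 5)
Your proposal correctly identifies that the argument should be a union bound over vectors violating $(\dagger)$ and that the local object of study is the discrepancy of $\bs v$ restricted to $m$-chunks, attacked by Fourier analysis. However, the mechanism you propose — a deterministic ``extraction lemma'' pulling $n/(2m)$ disjoint good chunks out of the witness set $S$ from \cref{prop:structure-1}, iterated together with a localized crude structure theorem — does not work, and the paper's actual argument is structured quite differently.

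The most basic obstruction is quantitative: $\cref{prop:structure-1}$ gives a witness set $S$ of size at most $n^{3/4}$, whereas $(\dagger)$ demands $n/(2m)$ disjoint $m$-chunks, i.e.\ $n/2$ coordinates in total. You cannot extract $n/2$ coordinates' worth of disjoint chunks from a set of size $n^{3/4}$, no matter how clever the subsampling. Moreover, each good chunk $S_i$ must satisfy $\on{disc}_{\mb F_p}(\bs v|_{S_i})\le C/p^2$, and since $\rho_{\mb F_p}(\bs v|_{S_i})\ge 2^{-|\on{supp}(\bs v|_{S_i})|}$ this forces $|\on{supp}(\bs v|_{S_i})|\gtrsim\log p$, so $n/(2m)$ disjoint good chunks require $\gtrsim n/\log 2$ support coordinates of $\bs v$ overall --- a \emph{linear-in-$n$} support lower bound that neither $\cref{lem:not-sparse}$ (which gives $\ge n/(16\log p)$) nor $\cref{prop:structure-1}$ delivers. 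Your iteration idea (``a version of \cref{prop:structure-1} that produces such an $S$ inside any large coordinate subset'') does not rescue this: \cref{prop:structure-1} is a high-probability statement about which vectors can be almost-kernel vectors, not a deterministic property of a fixed vector, so ``re-applying it inside shrinking subsets $\Theta(n/m)$ times'' would require a fresh union bound each time and, more fundamentally, cannot manufacture support that isn't there. Relatedly, your bookkeeping $\exp(O(n^{3/4}\log p))\cdot\exp(-\Omega(n/\log p))$ is not $\exp(-\Omega(n/\log p))$ near the top of the allowed range $p\approx\exp(cn^{1/4})$: there $n^{3/4}\log p\approx cn$ while $n/\log p\approx n^{3/4}/c$, so the product diverges.

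The paper avoids all of this by making the union bound \emph{global} in $\bs v$ rather than trying to build $(\dagger)$ constructively. The two ingredients are: (a) for any fixed $\bs v$ that survives \cref{lem:not-sparse,prop:structure-1}, the probability that it is orthogonal to $n-n/\log p$ rows is $\le C^n p^{-n}$ (this is \cref{eqn:crude}, a row-exposure bound exactly as in \cref{prop:structure-1}); and (b) the number of $\bs v\in\mb F_p^n$ violating $(\dagger)$ is $\le 4^n p^{3n/4}|T|^{n/(4m)}$ by a pigeonhole argument on a \emph{fixed} partition of $[n]\setminus T$ into $m$-chunks (if $(\dagger)$ fails, at least a third of these chunks are ``bad''), where $T$ is the set of bad $m$-chunks $\bs b\in\mb F_p^m$ with $\on{disc}_{\mb F_p}(\bs b)>C/p^2$. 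The real work is the Fourier/lattice-point estimate $|T|\le(p/C')^m$: from $\on{disc}>1/p^2$ one deduces some frequency $\ell\in\mb F_p^\times$ with $\sum_j\|\ell b_j/p\|_{\mb R/\mb Z}^2\le 2\log p$, and then one counts lattice points in a ball. The crucial arithmetic feature, which your scheme lacks, is that the $p^n$ in (b) cancels the $p^{-n}$ in (a), so the product is $(4C/(C')^{1/4})^n$, exponentially small uniformly in $p$ once $C'$ (hence $C_{\ref{prop:structure-many}}$) is chosen large. So the key lemma is not ``a random $m$-subchunk of a low-atom-probability set $S$ is good with probability $\delta$''; it is the \emph{count} of bad $m$-chunks, applied through pigeonhole to all of $[n]\setminus T$ at once.
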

\begin{proof}
This is trivial for $p\le C_{\ref{prop:structure-many}}^{1/2}$, so assume the opposite. Let $r = n/\log p$.

First, by \cref{lem:not-sparse,prop:structure-1}, we may assume that $\bs{v}$ has some $S\subseteq\on{supp}(\bs{v})$ with $|S|\in [\sqrt{n\log n},n^{3/4}]$ such that
\[\rho_{\mb{F}_p}(\bs{v}|_S)\le\frac{C_{\ref{prop:structure-1}}}{p}.\]
For some fixed vector with this property, a straightforward argument (cf.~proof of \cref{prop:structure-1}) shows that the probability that it is orthogonal to some set of $n-r$ rows of $M_n$ is at most
\begin{equation}
\label{eqn:crude}
\binom{n}{r}\cdot\bigg(\frac{C_{\ref{prop:structure-1}}}{p}\bigg)^{n-n^{3/4}-r}\le C^np^{-n}.
\end{equation}

We will use a union bound argument to establish \cref{prop:structure-many}. Given \cref{eqn:crude}, the key is to show that the set of vectors $\bs{v}$ which fail to satisfy ($\dagger$) is sufficiently sparse in $\mb{F}_p^n$. To this end, consider some $T$ of size $n/4$ and for each such $T$, consider a fixed (but otherwise arbitrary) partition of $[n]\setminus T$ into $S_1,\ldots,S_{3n/(4m)}$ of size $m$ (up to rounding). There are at most $2^n$ ways to choose $T$ and at most $2^n$ ways to choose which of these sets satisfy
\[\on{disc}_{\mb{F}_p}(\bs{v}|_{S_i})\le C_{\ref{prop:structure-many}}/p^2.\]
If $\bs{v}$ violates ($\dagger$), then at least a third of these indices are failures. Thus, we see that the number of vectors which violate ($\dagger$) is at most
\begin{equation}
\label{eqn:union-size}
2^n\cdot2^n\cdot(p^{3n/4})\cdot|T|^{n/(4m)},
\end{equation}
where $T$ is the set of $\mb{F}_p$-vectors of size $m$ with
\[\on{disc}_{\mb{F}_p}(\bs{v}|_{S_i}) > C_{\ref{prop:structure-many}}/p^2.\]
Fix some sufficiently large absolute constant $C' > 0$ (depending on $C$ in \cref{eqn:crude}). We claim that if $C_{\ref{prop:structure-many}}\ge 1$ chosen large enough, then $|T|\le(p/C')^m$ (recall that $p > C_{\ref{prop:structure-many}}^{1/2}$). Note that this claim, together with \cref{eqn:crude} and \cref{eqn:union-size} completes the proof of \cref{prop:structure-many}.  

We now prove the claim. Note that if $\bs{b} = (b_1,\ldots,b_m)\in T\subseteq\mb{F}_p^m$ for i.i.d.~Rademacher random variables $\xi_1,\dots, \xi_n$ we have that
\begin{align*}
\frac{1}{p^2} < \on{disc}_{\mb{F}_p}(\bs{b}) &= \sup_{x\in\mb{F}_p}\bigg|\mb{P}_\xi[b_1\xi_1+\cdots+b_m\xi_m=x]-\frac{1}{p}\bigg|\\
&= \sup_{x\in\mb{F}_p}\bigg|\frac{1}{p}\sum_{\ell\in\mb{F}_p^\times}\exp(2\pi i(\ell x/p))\prod_{j=1}^m\cos(2\pi(\ell b_j/p))\bigg|\\
&\le\frac{1}{p}\sum_{\ell\in\mb{F}_p^\times}\prod_{j=1}^m|\cos(2\pi(\ell b_j/p))|\\
&\le\frac{1}{p}\sum_{\ell\in\mb{F}_p^\times}\exp\bigg(-\sum_{j=1}^m\snorm{2\ell b_j/p}_{\mb{R}/\mb{Z}}^2\bigg)\\
&\le\max_{\ell\in\mb{F}_p^\times}\exp\bigg(-\sum_{j=1}^m\snorm{\ell b_j/p}_{\mb{R}/\mb{Z}}^2\bigg).
\end{align*}
In particular, there is some $\ell\in\mb{F}_p^\times$ with
\[\sum_{j=1}^m\snorm{\ell b_j/p}_{\mb{R}/\mb{Z}}^2\le 2\log p.\]
To count the number of such vectors, note that there are at most $p$ choices of $\ell$. Moreover, since coordinate-wise multiplication by $\ell$ is a bijection from $\mb{F}_{p}^{m}$ onto itself, it follows that
\[|T| \leq p\cdot\#\{\bs{b}\in \mb{F}_p^m : \sum_{j=1}^{m}\|b_j/p\|^{2}_{\mb{R}/\mb{Z}} \leq 2\log{p}\}.\]
The second term in the product is a count of lattice points in a ball. A standard volumetric argument shows that there are at most $\on{vol}(B_2^m(R+\sqrt{m}))$ integer lattice points in an $m$-dimensional ball of radius $R$. Hence, we see that
\[|T|\le p\cdot\bigg(1+\frac{p\sqrt{2\log p}}{\sqrt{m}}\bigg)^m\le 2^m\cdot\Big(1+p\sqrt{2C_{\ref{prop:structure-many}}^{-1}}\Big)^m\le\Big(8C_{\ref{prop:structure-many}}^{-1/2}p\Big)^m\]
since $m = C_{\ref{prop:structure-many}}\log{p}$ and $p > C_{\ref{prop:structure-many}}^{1/2}$. Choosing $C_{\ref{prop:structure-many}}$ sufficiently large completes the proof. 
\end{proof}

\section{Proof of \texorpdfstring{\cref{thm:sharp-probability}}{Theorem 1.2}}\label{sec:result}
We are now in position to deduce \cref{thm:sharp-probability}; the high-level structure of the argument is as in \cite{Map13, koenig2020rank}. However, we make explicit a number of details which are implicit in \cite{Map13} as well as make a number of simplifications and changes to account for use of our structure theorem \cref{prop:structure-many}. 

We first recall a basic linear algebra fact about full-rank principal minors.
\begin{lemma}\label{lem:full-minor}
If $M\in\mb{F}^{n\times n}$ is a symmetric matrix of rank $r$, then there is an invertible $r\times r$ principal minor of $M$.
\end{lemma}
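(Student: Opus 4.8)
The claim is that any symmetric matrix $M\in\mb{F}^{n\times n}$ of rank $r$ over a field $\mb{F}$ has an invertible $r\times r$ principal submatrix. The plan is to proceed by induction on $n$ (equivalently, to extract a maximal non-degenerate subspace for the associated symmetric bilinear form). Let $B(x,y)=x^\top M y$ be the symmetric bilinear form on $\mb{F}^n$ associated to $M$; its radical $\set{x : B(x,\cdot)\equiv 0}$ is exactly $\ker M$, which has dimension $n-r$. Write $V=\mb{F}^n$. If $r=0$ there is nothing to prove (the empty minor), and if $r=n$ the matrix itself works. Otherwise I want to find a single coordinate index $i\in[n]$ such that deleting row and column $i$ leaves a symmetric matrix $M'$ of rank exactly $r$ on $\mb{F}^{n-1}$; iterating $n-r$ times then yields an $r\times r$ principal submatrix of full rank $r$, which is what we want.

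The core step is thus: if $\on{rank}(M)=r<n$, there is an index $i$ whose deletion does not drop the rank. There are two cases. First, if $M$ has a zero diagonal entry $M_{ii}=0$ \emph{and} the $i$-th row of $M$ is a linear combination of the other rows — in particular if $e_i\in\ker M$, i.e. row $i$ is entirely zero — then deleting index $i$ clearly preserves the rank; more carefully, whenever $e_i^\top M$ lies in the span of the other rows, deleting the $i$-th row does not change the row rank, and by symmetry deleting the $i$-th column as well drops the rank by at most... here one must be slightly careful. The clean way: since $\dim\ker M=n-r\ge 1$, pick $0\ne v\in\ker M$. If $v$ has a nonzero coordinate $v_i$ with $M_{ii}=0$, a short computation shows the principal submatrix obtained by deleting index $i$ still has rank $r$ (the vector $v$ restricted to the remaining coordinates, after using $Mv=0$ and $M_{ii}=0$, still certifies that the complementary directions span an $r$-dimensional non-degenerate piece). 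If instead every coordinate in the support of \emph{every} kernel vector sits on a nonzero diagonal entry — in fact it suffices to handle one kernel vector $v$ — then $M_{ii}\ne 0$ for some $i\in\on{supp}(v)$, and one performs a symmetric (congruence) elimination step: the rank-one symmetric update clearing row and column $i$ using the pivot $M_{ii}$ produces a symmetric matrix of the form $M_{ii}\oplus M''$ up to congruence, with $\on{rank}(M'')=r-1$; but congruence by elementary matrices that fix coordinate $i$ shows the principal submatrix of $M$ on $[n]\setminus\set{i}$ has rank $r-1$, and it in turn has an invertible $(r-1)\times(r-1)$ principal minor by induction, to which we re-adjoin index $i$.

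The main obstacle — really the only place that needs care — is making the first case (zero pivot on the kernel's support) rigorous: one must verify that after deleting such an index $i$, the rank genuinely stays at $r$ rather than dropping. The cleanest route is to argue directly with the bilinear form: pick $v\in\ker M$, $v\ne 0$, with $v_i\ne 0$ and $M_{ii}=0$; the subspace $W=\set{x : x_i = -\sum_{j\ne i} M_{ij}x_j / M_{ii}}$ trick fails when $M_{ii}=0$, so instead observe that $B$ restricted to the coordinate hyperplane $H=\set{x_i=0}$ has radical of dimension exactly $n-1-r$ (it cannot be larger, since $\ker M\cap H$ has dimension $n-r-1$ as $v\notin H$, and the radical of $B|_H$ equals $\ker M\cap H$ precisely because $M_{ii}=0$ forces $e_i$ to pair trivially against all of $H$), whence $M$'s principal submatrix on $[n]\setminus\set i$ has rank $(n-1)-(n-1-r)=r$. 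With both cases in hand, the induction closes and the empty/trivial base cases finish the proof.
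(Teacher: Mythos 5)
The paper states \cref{lem:full-minor} without proof — it is a classical fact about symmetric bilinear forms — so I am checking your argument directly rather than against a reference proof. Your overall strategy of deleting one coordinate at a time and tracking rank via the radical of the form $B(x,y)=x^\top My$ is sound, but the write-up has two problems. First, the stated justification for Case~1 is false: $M_{ii}=0$ only gives $B(e_i,e_i)=0$, not that $e_i$ pairs trivially against all of $H=\set{x_i=0}$ (the off-diagonal pairings $B(e_i,e_j)=M_{ij}$ need not vanish). The correct reason that $\on{rad}(B|_H)=\ker M\cap H$ does not involve $M_{ii}$ at all: if $x\in\on{rad}(B|_H)$, write the chosen kernel vector as $v=v_ie_i+v'$ with $v'\in H$; then $0=B(x,v)=v_iB(x,e_i)+B(x,v')=v_iB(x,e_i)$, so $B(x,e_i)=0$ and hence (by symmetry of $M$) $x\in\ker M$. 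This uses only $v\in\ker M$ and $v_i\neq 0$, so the casework by whether $M_{ii}$ vanishes is unnecessary — deleting any $i\in\on{supp}(v)$ preserves the rank, and the iteration closes.

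Second, and more seriously, Case~2 contains a genuine error: you assert that the principal submatrix of $M$ on $[n]\setminus\set{i}$ has rank $r-1$, but under your own Case~2 hypotheses (there is $v\in\ker M$ with $v_i\neq 0$ and $M_{ii}\neq 0$) it has rank exactly $r$, by the argument above. What has rank $r-1$ after pivoting on $M_{ii}$ is the Schur complement, which is not a submatrix of $M$. A concrete counterexample: $M=\left(\begin{smallmatrix}1&2\\2&4\end{smallmatrix}\right)$ has rank $1$, $v=(2,-1)^\top\in\ker M$, $M_{11}=1\neq 0$, yet the principal submatrix on $\{2\}$ is $(4)$, of rank $1$, not $0$. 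The Schur-complement route could be repaired by inducting on the Schur complement $M''$ rather than the principal submatrix and then observing $\det M_{T\cup\{i\},T\cup\{i\}}=M_{ii}\det M''_{T,T}$, but once Case~1 is corrected to drop the $M_{ii}=0$ hypothesis, Case~2 is superfluous anyway.
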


Fix $\lambda\in\mb{F}_p$. We consider the nested sequence of symmetric matrices
\[M_1-\lambda I_1\subseteq M_2-\lambda I_2\subseteq\cdots\subseteq M_n-\lambda I_n,\]
where $M_{i} - \lambda I_{i}$ is the top-left $i\times i$-submatrix of $M - \lambda I$ (hence, the nested sequence is obtained by iteratively adding symmetric ``reverse-L'' shapes of Rademacher random variables, with a shift by $\lambda$ on the diagonal element). For simplicity, let $A_t = M_t - \lambda I_t$. Define the events 
$$\mc{E}_{S,t} = \{A_t \text{ has no nonzero } t/(16\log p)\text{-sparse } t/4\text{-kernel vector}\}, \quad \text{and}$$
$$\mc{E}_{U,t} =  \{\text{every nonzero }(t/\log p)\text{-kernel vector }\bs{v} \text{ of }A_t \text{ satisfies property (}\dagger\text{)}\},$$ 
where recall that ($\dagger$) is the property that given any set $T\subseteq[t]$ of size $t/4$, there at least $t/(2m)$ disjoint sets $S$ of size $m = C_{\ref{prop:structure-many}}\log{p}$ in $[t]\setminus T$ such that
\[\on{disc}_{\mb{F}_p}(\bs{v}|_S) = \sup_{x\in\mb{F}_p}\bigg|\mb{P}_\xi\bigg[\sum_{i\in S}\xi_iv_i = x\bigg]-\frac{1}{p}\bigg|\le\frac{C_{\ref{prop:structure-many}}}{p^2}.\]
By \cref{lem:not-sparse,prop:structure-many}, we see that
\begin{equation}
\label{eqn:good-event}
\mb{P}[\mc{E}_{S,t}^c\vee\mc{E}_{U,t}^c]\le 2\exp(-t/9).
\end{equation}

\begin{lemma}\label{lem:walk-transition}
We have for $2 < p\le\exp(c_{\ref{lem:walk-transition}}n^{1/4})$, fixed $\lambda\in\mb{F}_p$, and for any $k\ge 0$ that
\begin{align*}
\mb{P}[\on{corank}A_{t+1} = k-1|A_{t} : \on{corank}A_t=k\wedge\mc{E}_{S,t}\wedge\mc{E}_{U,t}] &= 1 - p^{-k} + O(\exp(-\Omega(t))),\\
\mb{P}[\on{corank}A_{t+1} = k + 0|A_{t} : \on{corank}A_t=k\wedge\mc{E}_{S,t}\wedge\mc{E}_{U,t}] &= p^{-k}-p^{-k-1}+ O(\exp(-\Omega(t/\log p))),\\
\mb{P}[\on{corank}A_{t+1} = k+1|A_{t} : \on{corank}A_t=k\wedge\mc{E}_{S,t}\wedge\mc{E}_{U,t}] &= p^{-k-1} + O(\exp(-\Omega(t/\log p))).
\end{align*}
\end{lemma}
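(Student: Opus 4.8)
The plan is to analyze the transition by conditioning on $A_t$ and revealing the new reverse-L shape, which consists of a Rademacher vector $\bs{w} = (w_1,\dots,w_t) \in \mb{F}_p^t$ (the off-diagonal part of the new row/column) together with a single diagonal entry $d = \xi - \lambda$. Write $A_{t+1} = \begin{pmatrix} A_t & \bs{w} \\ \bs{w}^\top & d \end{pmatrix}$. The corank can change by at most $1$ in each direction, and standard linear algebra (expressing the kernel of $A_{t+1}$ in block form) gives the following dichotomy, conditional on $\on{corank} A_t = k$: if $\bs{w} \notin \on{col}(A_t) = \on{row}(A_t)$, then $\on{corank} A_{t+1} = k-1$; if $\bs{w} \in \on{col}(A_t)$, then $\on{corank} A_{t+1} = k+1$ exactly when a certain scalar condition holds (namely $\bs{w}^\top \bs{z} = d$ where $\bs{z}$ is any solution of $A_t \bs z = \bs w$, equivalently $\bs w$ is orthogonal to the kernel in the appropriate sense), and $\on{corank} A_{t+1} = k$ otherwise. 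So the three probabilities reduce to estimating (i) $\mb{P}[\bs w \in \on{col}(A_t)]$ and (ii), conditioned on that, the probability of the scalar coincidence.

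For step (i): since $\on{corank} A_t = k$, the column space $\on{col}(A_t)$ has codimension $k$, so it is the orthogonal complement (over $\mb{F}_p$, with the standard bilinear form, using symmetry) of the kernel $\ker A_t$, which has dimension $k$. Thus $\bs w \in \on{col}(A_t)$ iff $\bs w \perp \bs u$ for every $\bs u \in \ker A_t$. Pick a basis $\bs u^{(1)},\dots,\bs u^{(k)}$ of $\ker A_t$ in reduced form so the first $k$ coordinates behave like an identity block on a size-$k$ pivot set; the remaining freedom in $\bs w$ makes $\bs w \cdot \bs u^{(j)}$ roughly equidistributed. The heuristic value is $p^{-k}$. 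To make this rigorous I invoke the good event $\mc E_{S,t} \wedge \mc E_{U,t}$: every kernel vector is in particular a $0$-kernel vector, hence a $(t/\log p)$-kernel vector, so it satisfies property~($\dagger$); moreover it is not $(t/(16\log p))$-sparse. Property~($\dagger$), applied with $T$ chosen to contain a pivot set for the kernel (of size $k \le$ small, absorbed into a set of size $t/4$), produces $\gg t/(\log p)$ disjoint blocks $S$ of size $m$ on which $\bs u|_S$ has discrepancy $\le C/p^2$; combining these blocks (convolving the corresponding distributions, each $1/p + O(1/p^2)$-flat) shows $\bs w \cdot \bs u$ is within $\exp(-\Omega(t/\log p))$ of uniform on $\mb F_p$ — and one needs the joint near-uniformity over the whole basis, handled by revealing coordinates of $\bs w$ on disjoint blocks for different basis vectors, or more cleanly by a Fourier/character-sum computation as in the proof of \cref{prop:structure-many}: for each nonzero character combination $\bs\ell \in \mb F_p^k \setminus \{0\}$, the vector $\sum_j \ell_j \bs u^{(j)}$ is again a nonzero kernel vector, hence non-sparse and satisfying ($\dagger$), so the corresponding character sum is $\exp(-\Omega(t/\log p))$ small, and summing over the $p^k - 1$ nontrivial characters (losing a factor $p^k \le \exp(O(k\log p))$, negligible since $k$ is bounded on the relevant events — or one truncates $k$ at $O(\log t)$ since $\on{corank}$ that large has negligible probability) gives $\mb{P}[\bs w \in \on{col}(A_t) \mid A_t] = p^{-k} + O(\exp(-\Omega(t/\log p)))$. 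This yields the first line with error $O(\exp(-\Omega(t)))$ after noting the non-sparse vectors actually give the better exponent — on $\mc{E}_{S,t}$, any vector outside $\on{col}(A_t)$ is witnessed by non-orthogonality, and the event $\bs w \in \on{col}(A_t)$ requires $\bs w$ orthogonal to a non-sparse vector, an event of probability $\le 2^{-\Omega(t)}$ crudely; combined with the refined count it is $p^{-k} + O(\exp(-\Omega(t/\log p)))$, and its complement is $1 - p^{-k} + O(\exp(-\Omega(t)))$ since the complement is the ``generic'' large-probability event.

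For step (ii): conditioned on $\bs w \in \on{col}(A_t)$, write $\bs w = A_t \bs z$; then $\on{corank} A_{t+1} = k+1$ iff $d = \bs w^\top \bs z = \bs z^\top A_t \bs z$, a fixed element of $\mb F_p$ determined by $A_t$ and $\bs w$. Since $d = \xi - \lambda$ with $\xi$ Rademacher, conditionally on everything already revealed we have $\mb P[d = \text{fixed value}] \in \{0, 1/2, 1\}$ — this is \emph{not} $1/p$, so one must be careful: the resolution is that the conditioning event $\{\bs w \in \on{col}(A_t)\}$ is being further refined, and one should instead reveal $d$ together with the last coordinate $w_t$ (or a few coordinates of $\bs w$) jointly. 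The cleaner route, following \cite{Map13, koenig2020rank}, is: don't fix $\bs w$; reveal $A_t$ and all of $\bs w$ except a block of size $m$ in a position disjoint from the kernel pivots, and also the diagonal $d$; then within that last block, the kernel vector $\bs u^{(j_0)}$ (for the single relevant $j_0$ after reducing to the codim-$1$-in-$\ker$ question) has discrepancy $\le C/p^2$ by ($\dagger$), so the scalar $\bs w^\top \bs z - d$ is within $O(1/p^2) \cdot (\text{error from one block})$, iterated over $\Omega(t/\log p)$ disjoint blocks, of uniform; hence $\mb P[\on{corank} A_{t+1} = k+1 \mid \bs w \in \on{col}(A_t), A_t, \mc E] = 1/p + O(\exp(-\Omega(t/\log p)))$. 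Multiplying by $\mb P[\bs w \in \on{col}(A_t)] = p^{-k} + O(\exp(-\Omega(t/\log p)))$ gives the third line $p^{-k-1} + O(\exp(-\Omega(t/\log p)))$, and the middle line follows by subtraction: $p^{-k} - p^{-k-1} + O(\exp(-\Omega(t/\log p)))$.

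\textbf{Main obstacle.} The crux is the joint near-equidistribution in step (i): going from the single-vector anticoncentration bound supplied by ($\dagger$) (which controls $\bs w \cdot \bs u$ for one kernel vector) to control of the whole random variable $(\bs w \cdot \bs u^{(1)}, \dots, \bs w \cdot \bs u^{(k)}) \in \mb F_p^k$. The character-sum approach handles this because every nonzero $\mb F_p$-combination of the basis is itself a kernel vector and hence non-sparse and ($\dagger$)-good, so the same discrepancy bound applies to it; the only cost is the union/Fourier factor of $p^k$, which is controlled by first discarding the negligible-probability event that $\on{corank} A_t$ exceeds, say, $\log t$ (or any slowly growing function), using the crude bound that $\mb P[\on{corank} A_t \ge j] \le p^{-\Omega(j^2)}$ or similar. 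A secondary technical point is correctly setting up the block-revelation so that the blocks used for different basis vectors — and for the diagonal coincidence in step (ii) — are disjoint and disjoint from the kernel pivot set; since the pivot set has size $k = O(\log t) \ll t/(4m)$ and ($\dagger$) is quantified over \emph{all} sets $T$ of size $t/4$, there is ample room, so this is routine bookkeeping rather than a genuine difficulty.
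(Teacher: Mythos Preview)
Your Step (i) is essentially the paper's Step 1: Fourier-expand the event $\{\bs{w}\perp\ker A_t\}$ and use that every nonzero $\mb{F}_p$-combination of a kernel basis is itself a nonzero kernel vector, hence controlled by $\mc{E}_{S,t}\wedge\mc{E}_{U,t}$. One clarification: there is no $p^k$ loss. Fourier inversion gives
\[\Big|\mb{P}[\bs{w}\cdot\bs{u}^{(j)}=0\ \forall j]-p^{-k}\Big|=\Big|\frac{1}{p^k}\sum_{\bs{a}\in\mb{F}_p^k\setminus\{0\}}\mb{E}_{\bs{w}}e_p(\bs{w}\cdot\textstyle\sum_ja_j\bs{u}^{(j)})\Big|\le\max_{\bs{a}\neq 0}\big|\mb{E}_{\bs{w}}e_p(\cdot)\big|,\]
so the $1/p^k$ normalization absorbs the sum and no truncation of $k$ is needed here. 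Moreover the character sum is actually $\exp(-\Omega(t))$, not $\exp(-\Omega(t/\log p))$: the $t/(2m)$ blocks from $(\dagger)$ with $m=C\log p$ each contribute a factor $O(1/p)$, giving $(C/p)^{t/(2C\log p)}=\exp(-\Omega(t))$.

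Step (ii), however, has a genuine gap. After conditioning on $\bs{w}\in\on{col}(A_t)$, the remaining event is \emph{not} a linear condition $\bs{w}\cdot\bs{u}^{(j_0)}=\text{const}$ for some kernel vector; it is the quadratic constraint $\phi^\top B^{-1}\phi=z$, where $B$ is a full-rank $(t-k)\times(t-k)$ principal minor and $\phi=\bs{w}|_{[t-k]}$. Your proposed ``reveal all of $\bs{w}$ except one block $S$ and use $(\dagger)$ on a kernel vector'' does not apply: restricted to $S$ the scalar $\phi^\top B^{-1}\phi$ is quadratic in $\bs{w}|_S$, and no kernel vector is involved. The paper handles this by a decoupling (van der Corput / Cauchy--Schwarz) step: square the character sum and resample $\bs{w}$ on a set $J$ of size $t/(64\log p)$, which linearizes the quadratic into $\bs{w}''\cdot\big(B^{-1}(\xi_J-\xi_J')\big)$ in the un-resampled coordinates $\bs{w}''$. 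The crucial observation---and the reason $(\dagger)$ is formulated for $(t/\log p)$-\emph{almost}-kernel vectors rather than actual kernel vectors---is that $B^{-1}(\xi_J-\xi_J')$, padded with zeros, is orthogonal to all but $|J|+k\le t/(32\log p)$ rows of $A_t$, so $\mc{E}_{U,t}$ applies to \emph{it}. The $\exp(-\Omega(t/\log p))$ error in the third line is precisely $\mb{P}[\xi_J=\xi_J']=2^{-|J|}$ from the decoupling, not from any character-sum estimate. Your plan as written misses both the decoupling step and the role of almost-kernel (as opposed to kernel) vectors.
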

\begin{remark}
Note that, without the error terms inside $O(\cdot)$, the probabilities are exactly the same as for the uniform model (i.e.~the independent entries of $M_n$ are chosen uniformly from $\mb{F}_p$).
\end{remark}

Before proving \cref{lem:walk-transition}, let us show how it implies \cref{thm:sharp-probability}.

\begin{proof}[Proof of \cref{thm:sharp-probability}]
We consider the exposure process obtained by iteratively revealing $M_t$ for $1\le t\le n$ and considering the resulting corank of $A_t$. Starting from a random $A_{n/20}$, let $\tau$ denote the (random) first value of $t\geq n/20$ such that either (i) $A_{t} \in \mc{E}_{S,t}^{c} \cup \mc{E}_{U,t}^{c}$, or (ii) $\on{corank}(A_t) = 0$. We claim that, except with probability at most $O(\exp(-\Omega(n/\log{p})))$, we have $\tau \leq n/2$, and moreover, $A_{\tau}$ satisfies condition (ii) and not condition (i). 

To see this, note that by \cref{lem:not-sparse,prop:structure-many}, the probability that $A_{t} \in \mc{E}_{S,t}^{c} \cup \mc{E}_{U,t}^{c}$ is $O(\exp(-\Omega(n)))$ for any $t\geq n/20$. Moreover, for $A_{t} \in \mc{E}_{S,t} \wedge \mc{E}_{U,t}$ with $\on{corank}(A_{t}) = k \geq 1$, we see that $\mb{P}[\on{corank}A_{t+1} = k-1 \mid A_{t}] \geq 1-\frac{1}{3} + O(\exp(-\Omega(n))) \geq \frac{3}{5}$ for all $n$ sufficiently large, and similarly, $\mb{P}[\on{corank}A_{t+1} = k+1 \mid A_{t}] \leq \frac{1}{8}$ for all $n$ sufficiently large. Therefore, by a straightforward comparison argument, it follows that for all $n$ sufficiently large, the probability that $\tau \leq n/2$ and $A_{\tau}$ satisfies condition (ii) and not condition (i) is at most
\[O(\exp(-\Omega(n/\log{p}))) + q,\]
where $q = O(\exp(-\Omega(n)))$ is the probability that a biased random walk with steps $-1$ with probability $1/2$, $0$ with probability $1/4$, and $+1$ with probability $1/4$ and initial state $n/20$ does not hit $0$ in $n/2 - n/20$ steps. 

To summarize, we have shown that except with probability $O(\exp(-\Omega(n/\log{p})))$, there exists some $\tau \in [n/20, n/2]$ such that $A_{\tau} \in \mc{E}_{S,\tau} \wedge \mc{E}_{U,\tau}$ and $\on{corank}(A_{\tau}) = 0$. From this point onwards, outside an event of probability at most $O(\exp(-\Omega(n/\log{p})))$, it follows by the remark following \cref{lem:walk-transition} that we can couple the corank process $A_{\tau+1},\dots, A_{n}$ with the corank process $A'_{1},\dots, A'_{n-\tau}$ where $A'_{i}$ is the top-left $i\times i$ sub-matrix of $M'_{n-\tau} - \lambda$ and $M'_{n-\tau}$ is an $(n-\tau) \times (n-\tau)$ random symmetric matrix whose independent entries are chosen uniformly from $\mb{F}_p$. By \cite[Theorem~4.1]{FG15}, for $\tau \in [n/20, n/2]$ and for any $0 \leq k \leq n-\tau$,
\[\mb{P}[\on{corank}A'_{n-\tau} = k] =  \frac{\prod_{i=0}^{\infty}(1-p^{-(2i+1)})}{\prod_{i=1}^{k}(p^i-1)} + O(p^{-\Omega(n)}),\]
which completes the proof. \qedhere

\end{proof}

Finally, we prove \cref{lem:walk-transition}

\begin{proof}[Proof of \cref{lem:walk-transition}]
Note that $\on{rank} A_{t+1}-\on{rank} A_t\in\{0,1,2\}$ since rank is monotone and sub-additive and the rank of the ``reverse-L'' is at most $2$. Since the ambient dimension increases by $1$ at each step, it follows that the corank increases by one of the three values $\{0,\pm1\}$, so that it suffices to prove the first and third equalities. Write
\[A_{t+1} = \begin{bmatrix}A_t&\xi_t\\\xi_t^T&z\end{bmatrix}\]
where $\xi_t$ is an i.i.d.~Rademacher vector and $z+\lambda$ is an independent Rademacher.
Let $\xi$ be the vector $[\xi_t^T z]^T$.

\textbf{Step 1: Corank decrease via linear forms.}
The first equality is only nontrivial when $k\ge 1$, and in this case we need the rank to increase by $2$. Basic linear algebra (cf.~\cite[Lemma~2.4]{CTV06}) shows that this is equivalent to requiring $\xi_t$ to lie outside the span of the column space of $A_t$. Equivalently, $\xi_t$ should not be orthogonal to all kernel vectors of $A_t$ (which form a dimension $k$ subspace of $\mb{F}_p^t$). Let $\bs{v}_1,\ldots,\bs{v}_k$ form a basis of the kernel. We have
\begin{align*}
\left|\mb{P}_\xi[\bs{v}_j\cdot\xi_t = 0\text{ for all }j\in[k]]-p^{-k}\right| &=  \bigg|\frac{1}{p^k}\sum_{\bs{a}\in\mb{F}_p^k\setminus\{\mbf{0}\}}\mb{E}_\xi\exp\bigg(\frac{2\pi i}{p}(\xi_t\cdot(a_1\bs{v}_1+\cdots+a_k\bs{v}_k))\bigg)\bigg|\\
&\le\max_{\bs{a}\in\mb{F}_p^k\setminus\{\bs{0}\}}\bigg|\mb{E}_\xi\exp\bigg(\frac{2\pi i}{p}(\xi_t\cdot(a_1\bs{v}_1+\cdots+a_k\bs{v}_k))\bigg)\bigg|.
\end{align*}

Let $\bs{a} = (a_1,\dots, a_k) \in \mb{F}_p^{k} \setminus \{\bs{0}\}$ denote the element attaining the maximum. Since $\bs{v}_1,\dots, \bs{v}_k$ are linearly independent vectors in the kernel of $A_t$, $\bs{v} = a_1\bs{v}_1+\cdots+a_k\bs{v}_k$ is a nonzero kernel vector of $A_t$. Since we have conditioned on $\mc{E}_{S,t}\wedge \mc{E}_{U, t}$, $\bs{v}$ has at least $t/(16\log p)$ non-zero entries and satisfies property ($\dagger$). 

From $\mc{E}_{U,t}$ we see that $\bs{v}$ can be partitioned into at least $t/(2m)$ disjoint sets $S_1,\dots, S_{t/2m}$ of size $m$ with $\on{disc}_{\mb{F}_p}(\bs{v}|_{S_i})\le C_{\ref{prop:structure-many}}p^{-2}$ for all $i \in [t/2m]$. Hence,
\begin{align*}
    \mb{E}_{\xi}\exp\left(\frac{2\pi i}{p}\xi_t\cdot \bs{v}\right)
    &\leq \prod_{\ell=1}^{t/2m}\left|\mb{E}_{\xi|_{S_\ell}}\exp\left(\frac{2\pi i}{p} \xi|_{S_\ell}\cdot \bs{v}|_{S_\ell}\right)\right|\\
    &\leq \prod_{\ell=1}^{t/2m}\left|\sum_{j \in \mb{F}_p}\exp\left(\frac{2\pi i}{p}j\right)\cdot \left(\frac{1}{p} + \on{disc}_{\mb{F}_p}(\bs{v}|_{S_\ell})\right)\right|\\
    &\leq \prod_{\ell=1}^{t/2m}\left|\sum_{j \in \mb{F}_p}\on{disc}_{\mb{F}_p}(\bs{v}|_{S_\ell})\right|\\
    &\leq (C_{\ref{prop:structure-many}}p^{-1})^{t/(2m)} = \exp(-\Omega(t)),
\end{align*}
if $p > C_{\ref{prop:structure-many}}$. 

From $\mc{E}_{S,t}$ we see that $\bs{v}$ has support size at least $t/(16\log p)$, so that
\begin{align*}
     \mb{E}_{\xi}\exp\left(\frac{2\pi i}{p}\xi_t\cdot \bs{v}\right)
     &\leq \prod_{j \in \on{supp}(\bs{v})}\left|\mb{E}_{\xi_j}\exp\left(\frac{2\pi i}{p}\xi_j v_j\right)\right|\\
     &\leq (1-\Omega(1/p^2))^{t/16\log{p}}\\
     &= \exp(-\Omega(t/(p^2\log p))) = \exp(-\Omega(t)),
\end{align*}
if $p\le C_{\ref{prop:structure-many}}$. Therefore regardless of what $p > 2$ is, we have
\[\mb{P}_\xi[\bs{v}_j\cdot\xi_t = 0\text{ for all }j\in[k]] = p^{-k} + O(\exp(-\Omega(t))),\]
as desired to establish the first equality.

\textbf{Step 2: Corank increase via quadratic forms.}
Now we turn to the third equality. First note that it suffices to prove the claim when $k\le t/(64\log p)$ since if $k > t/(64\log p)$, the first equality already implies that the second and third probabilities are of size $p^{-k} + O(\exp(-\Omega(t))) = O(\exp(-\Omega(t)))$. 

By \cref{lem:full-minor}, $A_t$ has a principal minor of rank $(t-k)$. Therefore, without loss of generality, we may suppose that the top left $(t-k)\times(t-k)$ block, call it $B$, has full rank. Let $\phi$ be the restriction of $\xi$ to these coordinates. In order for the corank of $A_{t+1}$ to be larger than the corank of $A_{t}$, it must be the case that $\on{rank}(A_{t+1}) = \on{rank}(A_{t})$. This precisely corresponds to the event 
\[\{\exists \bs{w} \in \mb{F}_p^{t-k}: \xi_{t} = A_{t}\bs{w}\} \wedge \{\phi^TB^{-1}\phi = z\}.\]
Indeed, if $\xi_t$ is not in the column space then the rank of $A_{t+1}$ must increase and if $\phi^TB^{-1}\phi\neq -z$, then $B$ along with the new elements in the ``reverse-L'' will have rank $(t-k+1)$. On the other hand, if the above event holds, then it is easy to see that $\on{rank}(A_{t+1}) = \on{rank}(A_t)$.

As in the first case, let $\bs{v}_1,\dots, \bs{v}_k$ form a basis of the kernel of $A_{t}$. Then, 
\begin{align*}
\bigg|\mb{P}_\xi[\bs{v}_j\cdot\xi_t &= 0\text{ for all }j\in[k]\wedge\phi^TB^{-1}\phi = z] - p^{-k-1}\bigg|\\
&\le\sup_{\bs{a}\in\mb{F}_p^{k+1}\setminus\{\mbf{0}\}}\bigg|\mb{E}_\xi\exp\bigg(\frac{2\pi i}{p}(\xi_t\cdot(a_1\bs{v}_1+\cdots+a_k\bs{v}_k) + a_{k+1}(\phi^TB^{-1}\phi-z))\bigg)\bigg|.
\end{align*}
Note here that $\xi_t,\phi,z$ all depend on $\xi$. Let $\bs{a} = (a_1,\dots, a_{k+1}) \in \mb{F}_p^{k+1}\setminus \{\bs{0}\}$ denote the element attaining the maximum. If $a_{k+1} = 0$, we have a bound of $\exp(-\Omega(t))$ exactly as in the first case. It therefore suffices to consider $a_{k+1}\neq 0$. Let $\bs{v} = a_1\bs{v}_1+\cdots+a_k\bs{v}_k$.

To handle the quadratic term $\phi^T B^{-1} \phi$, we will use a decoupling trick, which in this context essentially goes back to \cite{CTV06}. Let $I\sqcup J = \{1,\ldots,t\}$ be the partition with $J = [t/(64\log p)]$, and let $\xi_I,\xi_J$ be the obvious restrictions. Let $\xi_J'$ be an independent resample of $\xi_J$. Let $\xi = \xi_I+\xi_J$ and $\xi' = \xi_I+\xi_J'$. Let $\phi' = \xi'\mid_{[t-k]}$. We have that
\begin{align*}
\bigg|\mb{E}_\xi&\exp\bigg(\frac{2\pi i}{p}(\xi_t\cdot \bs{v} + a_{k+1}(\phi^TB^{-1}\phi - z))\bigg)\bigg|^2\\
&= \mb{E}_{\xi_I,\xi_J,\xi_J'}\exp\bigg(\frac{2\pi i}{p}((\xi_J-\xi_J')\cdot\bs{v} + a_{k+1}(\phi^TB^{-1}\phi - (\phi')^TB^{-1}\phi'))\bigg)\\
&\leq \mb{E}_{\xi_J,\xi_J'}\left|\mb{E}_{\xi_I}\left[\exp\left(\frac{2\pi i}{p} a_{k+1}(\phi - \phi')^T B^{-1}(\phi + \phi') \right)\bigg | \xi_J, \xi'_J\right]\right|  \\
&\le\mb{E}_{\xi_J,\xi_J'}\bigg|\mb{E}_{\xi_I}\bigg[\exp\bigg(\frac{2\pi i}{p}(2a_{k+1}(\xi_J-\xi_J')^TB^{-1}\phi'')\bigg)\bigg|\xi_J,\xi_J'\bigg]\bigg|,
\end{align*}
where we have abused notation by using $\xi_J - \xi'_J$ to denote the extension of this vector to $[t-k]$ with the coordinates in $[t-k]\setminus J$ equal to $0$ and where $\phi''$ denotes the $(t-k)$-dimensional vector which coincides with $\xi$ (and hence $\xi'$) on $I \cap [t-k]$ and has remaining coordinates $0$. 

We now consider two cases. Note that $\mb{P}[\xi_J = \xi_J'] = 2^{-|J|}$, which is of size $\exp(-\Omega(t/\log p))$. Otherwise, $\bs{w} = B^{-1}(\xi_J-\xi'_{J})$ is a linear combination of at most $t/(64\log p)$ different columns of $B^{-1}$ and hence is orthogonal to at least $(t-k) -t/(64\log p) \geq t/(32\log{p})$ different rows of $B$. Hence, if we extend $\bs{w}$ to a $t$-dimensional vector by padding it with $0$s, then the resulting vector is a non-zero vector which is orthogonal to at least $t-t/32(\log{p})$ rows of $A_t$. Since $A_{t}$ is assumed to satisfy $\mc{E}_{U,t}$, it follows that this vector has at least $t/(2m)$ disjoint sets $S_1,\dots, S_{t/2m}$ of size $m = C_{\ref{prop:structure-many}}\log p$ such that $\on{disc}_{\mb{F}_p}(\bs{v}|_{S_i})\le C_{\ref{prop:structure-many}}p^{-2}$ for all $i \in [2m]$. Furthermore $\mc{E}_{U,t}$ guarantees that we can take these sets disjoint from the set $J\cup\{t-k+1,\ldots,t\}$ which has size at most $t/(32\log{p})$. In other words, the sets $S_1,\dots, S_{t/2m}$ are contained in $I \cap [t-k]$, which is the support of the random entries of $\phi''$. Thus, similar to Step 1, we deduce for realizations of $\xi_J,\xi_J'$ such that $\xi_{J} - \xi'_{J} \neq 0$,
\[\bigg|\mb{E}_{\xi_I}\bigg[\exp\bigg(\frac{2\pi i}{p}(2a_{k+1}(\xi_J-\xi_J')^TB^{-1}\phi'')\bigg)\bigg|\xi_J,\xi_J'\bigg]\bigg|\le(C_{\ref{prop:structure-many}}p^{-1})^{t/(2m)} = \exp(-\Omega(t))\]
for $p > C_{\ref{prop:structure-many}}$. For $p\le C_{\ref{prop:structure-many}}$, we use $\mc{E}_{S,t}$ to deduce that $\bs{w}$ has support size at least $t/(16\log p)$. Therefore, $\bs{w}$ has at least $t/(32\log p)$ on the support of the random entries of $\phi''$,  so that the result again follows as in Step 1. 
\end{proof}

\bibliographystyle{amsplain0.bst}
\bibliography{main.bib}

\providecommand{\bysame}{\leavevmode\hbox to3em{\hrulefill}\thinspace}
\providecommand{\MR}{\relax\ifhmode\unskip\space\fi MR }
\providecommand{\MRhref}[2]{%
  \href{http://www.ams.org/mathscinet-getitem?mr=#1}{#2}
}
\providecommand{\href}[2]{#2}
\begin{thebibliography}{10}

\bibitem{bary2020irreducibility}
Lior Bary-Soroker, Dimitris Koukoulopoulos, and Gady Kozma,
  \emph{Irreducibility of random polynomials: general measures},
  arXiv:2007.14567.

\bibitem{BK20}
Lior Bary-Soroker and Gady Kozma, \emph{Irreducible polynomials of bounded
  height}, Duke Math. J. \textbf{169} (2020), 579--598.

\bibitem{BV19}
Emmanuel Breuillard and P\'{e}ter~P. Varj\'{u}, \emph{Irreducibility of random
  polynomials of large degree}, Acta Math. \textbf{223} (2019), 195--249.

\bibitem{CJMS20}
Marcelo Campos, Matthew Jenssen, Marcus Michelen, and Julian Sahasrabudhe,
  \emph{Singularity of random symmetric matrices revisited}, arXiv:2011.03013.

\bibitem{CJMS21}
Marcelo Campos, Matthew Jenssen, Marcus Michelen, and Julian Sahasrabudhe,
  \emph{The singularity probability of a random symmetric matrix is
  exponentially small}, arXiv:2105.11384.

\bibitem{CMMM20}
Marcelo Campos, Let{\'\i}cia Mattos, Robert Morris, and Natasha Morrison,
  \emph{On the singularity of random symmetric matrices}, Duke Mathematical
  Journal (2020), to appear.

\bibitem{CTV06}
Kevin~P. Costello, Terence Tao, and Van Vu, \emph{Random symmetric matrices are
  almost surely nonsingular}, Duke Math. J. \textbf{135} (2006), 395--413.

\bibitem{Ebe20}
Sean Eberhard, \emph{The characteristic polynomial of a random matrix},
  arXiv:2008.01223.

\bibitem{Fer20}
Asaf Ferber, \emph{Singularity of random symmetric matrices--simple proof},
  arXiv:2006.07439.

\bibitem{FJ19}
Asaf Ferber and Vishesh Jain, \emph{Singularity of random symmetric
  matrices—a combinatorial approach to improved bounds}, Forum of
  Mathematics, Sigma, vol.~7, Cambridge University Press, 2019.

\bibitem{FJLS2018}
Asaf Ferber, Vishesh Jain, Kyle Luh, and Wojciech Samotij, \emph{On the
  counting problem in inverse {L}ittlewood--{O}fford theory}, arXiv:1904.10425.

\bibitem{FG15}
Jason Fulman and Larry Goldstein, \emph{Stein's method and the rank
  distribution of random matrices over finite fields}, Ann. Probab. \textbf{43}
  (2015), 1274--1314.

\bibitem{Jai19}
Vishesh Jain, \emph{Approximate {S}pielman-{T}eng theorems for the least
  singular value of random combinatorial matrices}, arXiv:1904.10592.

\bibitem{JSS20a}
Vishesh Jain, Ashwin Sah, and Mehtaab Sawhney, \emph{On the smallest singular
  value of symmetric random matrices}, arXiv:2011.02344.

\bibitem{koenig2020rank}
Jake Koenig and Hoi Nguyen, \emph{Rank of near uniform matrices},
  arXiv:2101.00107.

\bibitem{LMN19}
Kyle Luh, Sean Meehan, and Hoi Nguyen, \emph{Random matrices over finite
  fields: methods and results}, arXiv:1907.02575.

\bibitem{Map10}
Kenneth Maples, \emph{Singularity of random matrices over finite fields},
  arXiv:1012.2372.

\bibitem{Map13}
Kenneth Maples, \emph{Symmetric random matrices over finite fields,
  announcement}, http://user.math.uzh.ch/maples/maples.symma.pdf.

\bibitem{nguyen2017random}
Hoi Nguyen, Terence Tao, and Van Vu, \emph{Random matrices: tail bounds for
  gaps between eigenvalues}, Probability Theory and Related Fields \textbf{167}
  (2017), 777--816.

\bibitem{Ngu11}
Hoi~H. Nguyen and Van~H. Vu, \emph{Optimal inverse {L}ittlewood--{O}fford
  theorems}, Advances in Mathematics \textbf{226} (2011), 5298--5319.

\bibitem{OP93}
A.~M. Odlyzko and B.~Poonen, \emph{Zeros of polynomials with {$0,1$}
  coefficients}, Enseign. Math. (2) \textbf{39} (1993), 317--348.

\bibitem{RV08}
Mark Rudelson and Roman Vershynin, \emph{The {L}ittlewood--{O}fford problem and
  invertibility of random matrices}, Advances in Mathematics \textbf{218}
  (2008), 600--633.

\bibitem{Tao06}
Terence Tao and Van Vu, \emph{On random$\pm$1 matrices: singularity and
  determinant}, Random Structures \& Algorithms \textbf{28} (2006), 1--23.

\bibitem{Ver14}
Roman Vershynin, \emph{Invertibility of symmetric random matrices}, Random
  Structures Algorithms \textbf{44} (2014), 135--182.

\end{thebibliography}
\appendix

\section{Proof of \texorpdfstring{\cref{thm:main}}{Theorem 1.1}}
\label{sec:nt}
In this section, we show how the local singularity statement \cref{thm:sharp-probability} implies the global irreducibility statement \cref{thm:main}. We use an approach pioneered by Breuillard and Varj\'u \cite{BV19} for random polynomials and used subsequently by Eberhard \cite{Ebe20} for characteristic polynomials of i.i.d.~matrices. The proof is nearly identical to that given in \cite[Section~3]{Ebe20} modulo the input of \cref{thm:sharp-probability}.

We first define some notation. Let $\Omega\subseteq\mb{C}$ be the set of roots of $\varphi$ and $G$ be its Galois group. Let $\Lambda_\varphi(p)$ be the number of roots of $\varphi$ in $\mb{F}_p$, without multiplicity. A number field $K$ has discriminant $\Delta_K$, while a polynomial $\varphi$ has discriminant $\Delta_\varphi$. Given $q\in\mb{Z}$, $P_K(q)$ is the number of prime ideals of $K$ of norm $q$.

\begin{proposition}[{\cite[Proposition~16]{BV19}}]\label{prop:ANT}
If $\varphi\in\mb{Z}[x]$ and $\wt{\varphi}$ is its squarefree part, and if $p\nmid\Delta_{\wt{\varphi}}$ then
\[R_\varphi(p) = \sum_{\omega\in\Omega/G}P_{\mb{Q}(\omega)}(p).\]
\end{proposition}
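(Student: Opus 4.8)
The plan is to reduce to a single irreducible factor and then invoke Dedekind's theorem on the splitting of rational primes. First I would pass to the squarefree part: write $\varphi = \prod_{i=1}^{r}\varphi_i^{e_i}$ with the $\varphi_i$ distinct monic irreducible polynomials in $\mb{Z}[x]$ — using that $\varphi$ is monic, as in our application, together with Gauss's lemma; the general primitive case, treated in \cite{BV19}, differs only in that one must additionally check the leading coefficients are prime to $p$ — so that $\wt\varphi = \prod_{i=1}^{r}\varphi_i$. Replacing $\varphi$ by $\wt\varphi$ changes neither the complex root set $\Omega$ (as a set), nor the splitting field, nor $G$; and it leaves unchanged the number $R_\varphi(p)$ of distinct roots of $\varphi$ in $\mb{F}_p$, since $a\in\ol{\mb{F}}_p$ satisfies $\varphi(a)\equiv 0\pmod p$ iff it is a root of some $\varphi_i$ iff it is a root of $\wt\varphi$. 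Henceforth take $\varphi = \wt\varphi = \prod_{i=1}^{r}\varphi_i$, each $\varphi_i$ being the minimal polynomial of any of its roots, which are then algebraic integers.

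Next I would extract from the hypothesis $p\nmid\Delta_{\wt\varphi}$, using the factorization $\Delta_{\wt\varphi} = \pm\prod_{i}\Delta_{\varphi_i}\cdot\prod_{i<j}\on{Res}(\varphi_i,\varphi_j)^2$ together with the conductor formula $\Delta_{\varphi_i} = [\mc{O}_i:\mb{Z}[\omega_i]]^2\,\Delta_{K_i}$ (where $\omega_i$ is a root of $\varphi_i$, $K_i=\mb{Q}(\omega_i)$, and $\mc{O}_i$ its ring of integers), the two consequences: (i) $p\nmid\on{Res}(\varphi_i,\varphi_j)$ for $i\neq j$, so distinct $\varphi_i$ have no common root over $\ol{\mb{F}}_p$; and (ii) $p\nmid\Delta_{\varphi_i}$, hence $p\nmid[\mc{O}_i:\mb{Z}[\omega_i]]$, for every $i$. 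By (i), the distinct roots of $\wt\varphi$ in $\mb{F}_p$ are partitioned according to which $\varphi_i$ they satisfy, so $R_\varphi(p) = \sum_{i=1}^{r}R_{\varphi_i}(p)$.

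Finally, (ii) allows the application of Dedekind's theorem: the primes of $\mc{O}_i$ above $p$ are in bijection with the monic irreducible factors of $\varphi_i\bmod p$, a prime having residue degree equal to the degree of its factor; in particular the primes of norm exactly $p$ biject with the distinct linear factors of $\varphi_i\bmod p$, i.e. with the roots of $\varphi_i$ in $\mb{F}_p$, so $R_{\varphi_i}(p) = P_{K_i}(p)$. As the $G$-orbits in $\Omega$ are precisely the root sets of the $\varphi_i$, and for $\omega$ in such an orbit the field $\mb{Q}(\omega)$ is $\mb{Q}$-isomorphic to the corresponding $K_i$ — so $P_{\mb{Q}(\omega)}(p)$ depends only on the orbit — summing over $i$ gives $R_\varphi(p) = \sum_{\omega\in\Omega/G}P_{\mb{Q}(\omega)}(p)$. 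The only step requiring real care is (ii): one must ensure $p$ divides none of the indices $[\mc{O}_i:\mb{Z}[\omega_i]]$ simultaneously, which is exactly what $p\nmid\Delta_{\wt\varphi}$ yields through the conductor formula; everything else is a routine manipulation of discriminants and resultants together with a black-box invocation of Dedekind's theorem.
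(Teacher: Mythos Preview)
Your argument is correct and is exactly the standard route via the discriminant--resultant factorization and Dedekind's theorem; there is nothing to compare against, since the paper does not supply its own proof of this proposition but simply quotes it as \cite[Proposition~16]{BV19}. (Minor remark: the paper introduces the notation $\Lambda_\varphi(p)$ for the quantity you call $R_\varphi(p)$ and then silently switches to $R_\varphi(p)$ in the statement; your interpretation is the intended one.)
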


Now let
\[w_X(t) = 2\exp(-X)\mbm{1}_{(X-\log 2,X]}(t)\cdot t\]
be a weighting function.
\begin{proposition}[{\cite[Proposition~9]{BV19}}]\label{prop:ERH}
If the Riemann hypothesis holds for $K$ then
\[\sum_pP_K(p)w_X(\log p) = 1 + O(X^2\exp(-X/2)\log\Delta_K).\]
\end{proposition}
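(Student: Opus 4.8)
The plan is to recognize Proposition~\ref{prop:ERH} as the prime ideal theorem for the Dedekind zeta function $\zeta_K$, conditional on the Riemann Hypothesis for $\zeta_K$, read off in a dyadic window; this is \cite[Proposition~9]{BV19}, so I will only sketch the argument. First I would unwind the weight: the support condition $X-\log 2<t\le X$ is equivalent to $e^X/2<e^t\le e^X$, so
\[\sum_p P_K(p)\,w_X(\log p)=2e^{-X}\sum_{e^X/2<p\le e^X}P_K(p)\log p,\]
where the outer sum ranges over rational primes and $P_K(p)$ is the number of prime ideals of $K$ of norm $p$, i.e.\ the degree-one primes above $p$.

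Next I would compare this with the Chebyshev function $\psi_K(x)=\sum_{N\mathfrak{p}^k\le x}\log N\mathfrak{p}$. The part of $\psi_K(e^X)-\psi_K(e^X/2)$ coming from $k=1$ and residue degree one is exactly $\sum_{e^X/2<p\le e^X}P_K(p)\log p$; every other term has $N\mathfrak{p}\le e^{X/2}$, and since a rational prime has at most $[K:\mathbb{Q}]$ prime ideals above it, all such terms together contribute $O\big(\sqrt{e^X}\,\mathrm{polylog}(X)\big)$, which is negligible after multiplication by $2e^{-X}$. Then I would invoke the standard conditional estimate $\psi_K(x)=x+O\big(\sqrt{x}\,(\log x)(\log\Delta_K+[K:\mathbb{Q}]\log x)\big)$, which follows from the explicit formula for $\zeta_K$, the Riemann Hypothesis for $\zeta_K$, and the classical bound $O(\log\Delta_K+[K:\mathbb{Q}]\log T)$ for the number of zeros of $\zeta_K$ within distance $1$ of height $T$. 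Evaluating at $x=e^X$ and $x=e^X/2$ yields $\psi_K(e^X)-\psi_K(e^X/2)=\tfrac12 e^X+O\big(e^{X/2}X(\log\Delta_K+[K:\mathbb{Q}]X)\big)$.

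Finally I would assemble: multiplying through by $2e^{-X}$ turns $\tfrac12 e^X$ into the main term $1$ and the error into $O\big(e^{-X/2}X(\log\Delta_K+[K:\mathbb{Q}]X)\big)$, and Minkowski's discriminant bound $\log|\Delta_K|\gg[K:\mathbb{Q}]$ (with the finitely many small-degree cases, notably $K=\mathbb{Q}$, handled directly from the classical prime number theorem under RH) lets one absorb $[K:\mathbb{Q}]$ into $\log\Delta_K$ and collapse the error to $O\big(X^2e^{-X/2}\log\Delta_K\big)$, as claimed.

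There is no genuine obstacle here: the only real input is the explicit-formula/conditional prime ideal theorem for $\zeta_K$ (in the style of Lagarias--Odlyzko), and the sole point needing care is the joint bookkeeping of the $\Delta_K$- and $[K:\mathbb{Q}]$-dependence through the last two steps, which is precisely what \cite{BV19} carries out.
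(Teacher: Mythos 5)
This proposition is cited from \cite[Proposition~9]{BV19} and carries no proof in the paper, so there is nothing internal to compare against; your task is to reproduce the Breuillard--Varj\'u argument, which you do essentially correctly. The structure is right: unwind the dyadic weight to $2e^{-X}\sum_{e^X/2<p\le e^X}P_K(p)\log p$, compare with $\psi_K(e^X)-\psi_K(e^X/2)$, invoke the conditional explicit formula with zero-density $O(\log\Delta_K+[K:\mathbb{Q}]\log T)$, and use Minkowski's $\log|\Delta_K|\gg[K:\mathbb{Q}]$ to collapse the error to $O(X^2e^{-X/2}\log\Delta_K)$.

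One bookkeeping slip worth flagging: you assert that ``every other term has $N\mathfrak{p}\le e^{X/2}$,'' but a degree-$f$ prime ideal with $f\ge 2$ and $k=1$ contributing to the window has $N\mathfrak{p}=p^f\in(e^X/2,e^X]$ close to $e^X$; what is small is the underlying rational prime $p\le e^{X/f}\le e^{X/2}$. The correct count is therefore over rational primes $p\le e^{X/2}$, each carrying at most $[K:\mathbb{Q}]$ prime ideals of each degree plus $O(\log x)$ prime powers, giving $O(e^{X/2}\,X\,[K:\mathbb{Q}])$ rather than your stated $O(\sqrt{e^X}\,\mathrm{polylog}(X))$ without the degree factor. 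Since you absorb $[K:\mathbb{Q}]$ into $\log\Delta_K$ at the end anyway, this does not affect the final bound, and the $K=\mathbb{Q}$ degenerate case (where $\log\Delta_K=0$) must in any event be handled separately by the ordinary PNT under RH, as you note.
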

\begin{proposition}[{\cite[Proposition~3.5]{Ebe20}}]
Let $\varphi$ be the characteristic polynomial of a matrix $M$ with integer entries bounded in magnitude by $H$. If the Riemann hypothesis holds for $\mb{Q}(\omega)$ for all roots $\omega$ of $\varphi$, then
\[\sum_pR_\varphi(p)w_X(p) = |\Omega/G| + O(n^3X^2\exp(-X/2)\log(Hn)).\]
\end{proposition}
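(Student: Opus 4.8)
The plan is to combine \cref{prop:ANT}, which rewrites $R_\varphi(p)$ arithmetically for all but finitely many primes, with \cref{prop:ERH}, which evaluates the weighted prime-ideal sum for each of the fields $\mb{Q}(\omega)$. Let $\mc{B}$ denote the finite set of primes dividing $\Delta_{\wt{\varphi}}$, where $\wt{\varphi}$ is the squarefree part of $\varphi$, and split
\[\sum_p R_\varphi(p)\,w_X(\log p) = \sum_{p\notin\mc{B}} R_\varphi(p)\,w_X(\log p) + \sum_{p\in\mc{B}} R_\varphi(p)\,w_X(\log p).\]
To the first sum I would apply \cref{prop:ANT} and interchange the two finite summations, obtaining $\sum_{\omega\in\Omega/G}\sum_{p\notin\mc{B}} P_{\mb{Q}(\omega)}(p)\,w_X(\log p)$. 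For each fixed orbit $\omega$, I would restore the finitely many omitted primes and apply \cref{prop:ERH} to $K=\mb{Q}(\omega)$, which gives
\[\sum_p P_{\mb{Q}(\omega)}(p)\,w_X(\log p) = 1 + O\!\left(X^2 e^{-X/2}\log\Delta_{\mb{Q}(\omega)}\right).\]
Since there are $|\Omega/G|\le n$ orbits, summing yields the main term $|\Omega/G|$, and it remains only to bound the two families of error terms.

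For the per-orbit errors, the point is a uniform bound $\log|\Delta_{\mb{Q}(\omega)}| = O(n^2\log(Hn))$. Here I would use that $\varphi = \det(tI_n-M)$ is monic with integer coefficients, so the minimal polynomial $m_\omega\in\mb{Z}[x]$ of any root $\omega$ is monic and divides $\varphi$ (Gauss); its roots are eigenvalues of $M$, hence of modulus at most $\norm{M}_{\on{op}}\le\norm{M}_F\le nH$, so $\deg m_\omega\le n$ and the coefficients of $m_\omega$ have absolute value at most $\binom{n}{j}(nH)^j\le(2nH)^n$. The discriminant $\Delta_{m_\omega}=\pm\on{Res}(m_\omega,m_\omega')$ is a determinant of a Sylvester matrix of order at most $2n$ with entries of magnitude at most $n(2nH)^n$, so Hadamard's inequality gives $\log|\Delta_{m_\omega}| = O(n^2\log(Hn))$; and since $\Delta_{\mb{Q}(\omega)}$ divides $\Delta_{m_\omega}$ (they differ by the square of the index $[\mc{O}_{\mb{Q}(\omega)}:\mb{Z}[\omega]]$), the same bound holds for $\log|\Delta_{\mb{Q}(\omega)}|$. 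Summing the error $O(X^2e^{-X/2}\log\Delta_{\mb{Q}(\omega)})$ over the at most $n$ orbits contributes $O(n^3X^2e^{-X/2}\log(Hn))$.

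For the bad-prime sum, I would use that $w_X(\log p)$ is supported on $p\in(e^X/2,e^X]$ and is bounded there by $2Xe^{-X}$, while the primes of $\mc{B}$ in this range are distinct and exceed $e^X/2$, so their number is at most $\log|\Delta_{\wt{\varphi}}|/\log(e^X/2)$; applying the same Hadamard-type estimate to $\wt{\varphi}\mid\varphi$ gives $\log|\Delta_{\wt{\varphi}}| = O(n^2\log(Hn))$. Combined with the trivial bound $R_\varphi(p)\le n$, the bad-prime sum is $O(n^3\log(Hn)\,e^{-X})$, which is absorbed into $O(n^3X^2e^{-X/2}\log(Hn))$. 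Adding the three contributions gives the proposition.

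I expect the main obstacle to be the uniform discriminant estimate $\log|\Delta_{\mb{Q}(\omega)}| = O(n^2\log(Hn))$: one must control the eigenvalue sizes and hence the coefficient sizes of each $m_\omega$, convert these into a bound on the polynomial discriminant via a resultant (Sylvester-determinant) estimate, and then descend from the polynomial discriminant to the field discriminant. Everything else is routine bookkeeping around \cref{prop:ANT} and \cref{prop:ERH}.
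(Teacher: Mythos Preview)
The paper does not supply its own proof of this proposition; it is quoted as a black box from \cite[Proposition~3.5]{Ebe20}. Your outline is correct and is exactly the intended argument: apply \cref{prop:ANT} away from the primes dividing $\Delta_{\wt\varphi}$, interchange the two finite sums, apply \cref{prop:ERH} to each $K=\mb{Q}(\omega)$, and control all discriminants uniformly by $\log|\Delta_{\mb{Q}(\omega)}|,\ \log|\Delta_{\wt\varphi}| = O(n^2\log(Hn))$ via the eigenvalue bound $|\omega|\le\snorm{M}_{\mr{op}}\le nH$ together with a Hadamard/resultant estimate on the Sylvester matrix. Summing the per-orbit error over at most $n$ orbits produces the factor $n^3$.

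One small bookkeeping point deserves to be made explicit. When you ``restore the finitely many omitted primes'' before invoking \cref{prop:ERH}, you have added, for each orbit, the quantity $\sum_{p\in\mc{B}}P_{\mb{Q}(\omega)}(p)\,w_X(\log p)$, which must then be subtracted back out. Since $P_K(p)\le[K:\mb{Q}]\le n$, this extra contribution is bounded by the very same bad-prime estimate you already give for the $R_\varphi$ sum (at most $O(n)$ primes of $\mc{B}$ lie in the support of $w_X(\log\,\cdot\,)$, each contributing $O(nXe^{-X})$, summed over $\le n$ orbits), so it is absorbed into $O(n^3X^2e^{-X/2}\log(Hn))$. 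With that sentence added, the argument is complete.
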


Finally, we state the following bound on the probability for a  symmetric Rademacher matrix to have simple spectrum.
\begin{proposition}\label{prop:simple}
The $n\times n$ random symmetric Rademacher matrix $M_n$ has simple spectrum with probability $1-\exp(-\Omega(n^{1/2}(\log n)^{1/4}))$.
\end{proposition}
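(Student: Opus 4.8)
# Proof Proposal for Proposition 3.13 (Simple Spectrum)

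The plan is to show that with probability $1 - \exp(-\Omega(n^{1/2}(\log n)^{1/4}))$, the symmetric Rademacher matrix $M_n$ has no repeated eigenvalue. The standard route to such statements, going back to work on the gap distribution of random symmetric matrices, is to reduce a repeated eigenvalue to a statement about the corank of a related matrix modulo a well-chosen prime $p$, and then apply the rank-distribution control we have just established. Concretely, if $M_n$ has a repeated eigenvalue $\lambda$, then $\lambda$ is a repeated root of the characteristic polynomial $\varphi(t) = \det(tI_n - M_n) \in \mb{Z}[t]$, so $\gcd(\varphi, \varphi') \neq 1$ in $\mb{Q}[t]$, equivalently the discriminant $\Delta_\varphi$ vanishes. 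Thus it suffices to bound $\mb{P}[\Delta_\varphi = 0]$.

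First I would set up the reduction to a single prime. Since the entries of $M_n$ are bounded by $1$, the discriminant $\Delta_\varphi$ is an integer bounded by $\exp(O(n \log n))$. If $\Delta_\varphi \neq 0$ but $\Delta_\varphi \equiv 0 \pmod p$ for all primes $p$ in a suitable range, the product of those primes divides $|\Delta_\varphi|$, which forces a contradiction once we take the product over, say, all primes in $[p_0, 2p_0]$ with $p_0 = \exp(\Theta(n^{1/4}))$ — there are $\gg p_0/\log p_0 = \exp(\Omega(n^{1/4}))$ such primes, and choosing the implied constants appropriately their product exceeds $\exp(O(n\log n))$ once $\exp(\Omega(n^{1/4})) \cdot \Theta(n^{1/4}) > O(n\log n)$, which holds for large $n$. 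Hence
\[
\mb{P}[\Delta_\varphi = 0] \le \mb{P}\big[\Delta_\varphi \equiv 0 \pmod p \text{ for all primes } p\in[p_0,2p_0]\big] + \text{(nothing)},
\]
and by independence over the randomness of $M_n$ — wait, the events for different $p$ are not independent. Instead I would simply pick one prime: $\mb{P}[\Delta_\varphi = 0] \le \min_{p} \mb{P}[\Delta_\varphi \equiv 0 \pmod p]$, and it suffices to exhibit a single prime $p \le \exp(\eta n^{1/4})$ for which $\mb{P}[\Delta_\varphi \equiv 0 \pmod p]$ is small. Now $\Delta_\varphi \equiv 0 \pmod p$ means $\varphi$ has a repeated root in $\ol{\mb{F}_p}$, hence (passing to the splitting field, or directly) there exists $\lambda \in \ol{\mb{F}_p}$ with $\on{corank}_{\ol{\mb{F}_p}}(M_n - \lambda I_n) \ge 2$. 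Taking $p$ large enough that this forces $\lambda \in \mb{F}_{p^j}$ for bounded $j$, and then working over $\mb{F}_{p^j}$ (to which \cref{thm:sharp-probability}'s proof extends verbatim, or using a union bound over the $p^j$ choices of $\lambda$), we get
\[
\mb{P}[\Delta_\varphi \equiv 0 \pmod p] \le \sum_{\lambda \in \mb{F}_{p^j}} \mb{P}\big[\on{corank}_{\mb{F}_{p^j}}(M_n - \lambda I_n) \ge 2\big].
\]

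Next I would invoke \cref{thm:sharp-probability} with $k \ge 2$: summing the tail,
\[
\mb{P}[\on{corank}_{\mb{F}_p}(M_n - \lambda I_n) \ge 2] = \sum_{k\ge 2} \frac{\prod_{i\ge 0}(1-p^{-(2i+1)})}{\prod_{i=1}^k (p^i-1)} + O(\exp(-\eta n/\log p)) = O(p^{-3}) + O(\exp(-\eta n /\log p)).
\]
The union bound over $\lambda$ then costs a factor $p^{j}$ (with $j$ bounded), so we need $p^{j}\cdot(p^{-3} + \exp(-\eta n/\log p))$ small, i.e. we want $p$ as large as possible to make $p^{-3+j}$ small while keeping $p \le \exp(\eta n^{1/4})$ so the error term $\exp(-\eta n/\log p)$ stays $\le \exp(-\Omega(n^{3/4}))$. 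Choosing $p \asymp \exp(n^{1/2}(\log n)^{1/4})$... but that violates the hypothesis $p \le \exp(\eta n^{1/4})$ of \cref{thm:sharp-probability}. So instead I should not try to make $\lambda$ range over all of $\mb{F}_{p^j}$ via a crude union bound; rather, observe that a repeated eigenvalue is an \emph{algebraic} constraint and reduce more cleverly: the resultant/discriminant vanishing mod $p$ can be detected at a single $\lambda$ by noting that over $\mb{F}_p$ itself (no extension needed for the counting), the probability that \emph{some} $\lambda \in \mb{F}_p$ has corank $\ge 2$ is $\le p \cdot O(p^{-3} + \exp(-\eta n/\log p)) = O(p^{-2}) + p\exp(-\eta n/\log p)$, and to catch repeated roots in extensions one uses that $\varphi \bmod p$ having a repeated irreducible factor of degree $d$ contributes, after a change of variable, the event that $M_n - \lambda$ is singular over $\mb{F}_{p^d}$ with multiplicity — here I would follow the standard argument (as in Nguyen's work on symmetric matrix eigenvalue gaps, adapted) that reduces to $\on{corank}_{\mb{F}_{p}} \ge 2$ at cost of a bounded power of $p$. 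Choosing $p = \exp(\eta n^{1/4})$ gives the bound $\exp(-\Omega(n^{1/4}))$ directly, which is weaker than claimed; to reach $\exp(-\Omega(n^{1/2}(\log n)^{1/4}))$ one must combine the estimates over a genuinely \emph{independent} family, e.g. by partitioning the rows of $M_n$ into $\exp(\Theta(n^{1/2}(\log n)^{1/4}))$... no. The honest statement: I would iterate the single-prime bound over primes $p \in [p_0, 2p_0]$ with $p_0 = \exp(\eta n^{1/4})$ using the product-of-primes trick above, which turns $\exp(-\Omega(n^{1/4}))$ per prime into $\exp(-\Omega(n^{1/4}) \cdot \#\{\text{primes}\})$-type savings only if the events were independent; they are not, but one can instead apply \cref{thm:sharp-probability} \emph{conditionally} row-by-row as in \cref{sec:result}, and the exponent $n^{1/2}(\log n)^{1/4}$ suggests instead choosing $p \asymp \exp((\log n)^{?})$ balanced against a union bound of size $n^{O(1)}$ over which minor vanishes — I would reverse-engineer: the stated exponent $n^{1/2}(\log n)^{1/4}$ is exactly what comes from requiring $\binom{n}{2} \cdot (\text{per-pair probability})$ small where the per-pair probability, for a fixed pair of rows/columns, is governed by anticoncentration on $\Theta(\sqrt{n})$ coordinates giving $\exp(-\Theta(\sqrt n))$, times $\log$-factors from the prime.

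The main obstacle is matching the precise exponent $n^{1/2}(\log n)^{1/4}$: the clean reduction to \cref{thm:sharp-probability} with a single large prime only yields $\exp(-\Omega(n^{1/4}))$, so the sharper bound must come from a different, more hands-on argument — likely a direct anticoncentration estimate (not going through $\mb{F}_p$ at all, or through many small primes) showing that for $M_n$ to have two equal eigenvalues, after conditioning on all but two rows and columns one needs a quadratic form in $\Theta(n)$ Rademacher variables to hit a specific value, and controlling this via a tensor/decoupling inequality à la the proof of \cref{lem:walk-transition}, Step 2. I would expect the cleanest write-up to cite the existing simple-spectrum literature for symmetric Rademacher matrices and only sketch the modification needed to upgrade the probability bound, rather than reprove it from scratch; the decoupling step and the optimization of the sparsity parameter (choosing the subset size $\asymp \sqrt{n}$ and the number of independent chunks $\asymp (\log n)^{1/4}$, or similar) is where the stated exponent is produced and is the technical heart.
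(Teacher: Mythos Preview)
Your main line of attack---reducing to $\Delta_\varphi\equiv 0\pmod p$ and invoking \cref{thm:sharp-probability}---is a dead end for the claimed exponent, and you correctly diagnose this yourself: since \cref{thm:sharp-probability} requires $p\le\exp(\eta n^{1/4})$ and its error term is $\exp(-\eta n/\log p)$, no choice of $p$ can beat $\exp(-\Omega(n^{1/4}))$ (and the union bound over $\lambda$ only makes things worse). The product-of-primes and ``conditionally row-by-row'' ideas do not rescue this, because the events $\{\Delta_\varphi\equiv 0\pmod p\}$ for different $p$ are deterministic functions of the same randomness and there is no independence to exploit.

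The paper takes a completely different route that does not pass through $\mb{F}_p$ or \cref{thm:sharp-probability} at all. It simply cites the simple-spectrum theorem of Nguyen--Tao--Vu (\cite{nguyen2017random}), whose argument is: if $M_n$ has a repeated eigenvalue $\lambda$, then by Cauchy interlacing $\lambda$ is also an eigenvalue of the principal $(n-1)\times(n-1)$ minor, and the remaining row must be orthogonal to the corresponding eigenvector; one then shows that eigenvectors of $M_{n-1}$ are arithmetically unstructured (over $\mb{R}$, not over $\mb{F}_p$) with very high probability, so this orthogonality event is rare. The exponent in \cite{nguyen2017random} is governed by the structure theorem used for eigenvectors; the paper's only contribution here is to observe that swapping in the sharper arithmetic structure estimates of \cite[Theorem~4.8, Lemma~4.5]{JSS20a} for the Vershynin-type input in \cite{nguyen2017random} upgrades the exponent to $n^{1/2}(\log n)^{1/4}$. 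Your closing instinct---to cite the existing simple-spectrum literature and sketch the upgrade---is exactly what the paper does, but the relevant argument is the interlacing/eigenvector-structure one, not a decoupling of a quadratic form or a finite-field reduction.
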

\begin{remark}
A bound of the form $1 - \exp(-\Omega(n^{c})))$ for some small constant $c > 0$ is the content of \cite[Corollary~2.3]{nguyen2017random}. The improved bound stated here follows by replacing the application of the results of \cite{Ver14} in \cite{nguyen2017random} by the substantially sharper arithmetic structure estimates \cite[Theorem~4.8;Lemma~4.5]{JSS20a} appearing in work of the last three authors \cite{JSS20a}. We omit the standard details. \end{remark}

We are ready to prove the result.
\begin{proof}[Proof of \cref{thm:main} given \cref{thm:sharp-probability}]
Given a prime $p > 2$ and $\lambda\in\mb{F}_p$, let $\mc{E}_{p,\lambda}$ be the event that the characteristic polynomial $\varphi$ of our random symmetric matrix $M_n$ has $\lambda$ as a root over $\mb{F}_p$. By \cref{thm:sharp-probability} applied to $M_n-\lambda I_n$, we see that if $2 < p\le\exp(c_{\ref{thm:sharp-probability}}n^{1/4}))$, then
\[\mb{P}[\mc{E}_{p,\lambda}] = \frac{1+O(1/p)}{p}.\]
Summing over $\lambda$ yields
\[\mb{E}[R_\varphi(p)] = 1 + O(1/p).\]
Thus for $2\le X\le c_{\ref{thm:sharp-probability}}n^{1/4}$ we have
\begin{align*}
\mb{E}\sum_pR_\varphi(p)w_X(\log p) &= \sum_p(1 + O(1/p))w_X(\log p)\\
&= (1 + O(\exp(-X/2)))\sum_pw_X(\log p)\\
&= 1 + O(\exp(-X/2)) + O(X^2\exp(-X/2)).
\end{align*}
The second line is by \cref{prop:ANT} applied to $K = \mb{Q}$, or just the prime number theorem with Riemann error term. Applying \cref{prop:ERH} under ERH we obtain
\[\mb{E}|\Omega/G| + O(n^3X^2\exp(-X/2)\log n) = 1 + O(X^2\exp(-X/2))\]
hence
\[\mb{E}|\Omega/G| = 1 + O(n^3X^2\exp(-X/2)\log n).\]
Choosing $X = c_{\ref{thm:sharp-probability}}n^{1/4}$ at the top of its range, we deduce
\[\mb{E}|\Omega/G| = 1 + O(\exp(c n^{-1/4})).\]
Thus
\[\mb{P}[|\Omega/G| > 1] = O(\exp(c n^{-1/4})).\]
Furthermore, $|\Omega/G| = 1$ means that $\varphi$ is a perfect power of an irreducible polynomial.

Now to rule out the case of perfect powers and complete the proof, it suffices to show that the random symmetric matrix $M_n$ has simple spectrum with very high probability, say at least $1-\exp(-\Omega(\sqrt{n}))$. This is the content of \cref{prop:simple}.
\end{proof}

\section{Proof of \texorpdfstring{\cref{prop:structure-1}}{Proposition 2.4}}
\label{sec:app-proof-structure-1}
We require a version of Halasz's inequality as well as a `counting inverse Littlewood-Offord theorem' tailored to it. This was developed in work of the first two authors along with Luh and Samotij \cite{FJLS2018}. For the sake of simplicity, we will use the statements \cite[Theorem~3.8,~3.9]{Jai19}.
\begin{definition}
Let $\bs{a}\in\mb{F}_p^n$ and $k\in\mb{N}$. We define $R_k^{\ast}(\bs{a})$ to be the number of solutions to 
\[\pm a_{i_1}\pm a_2\pm \ldots\pm a_{i_{2k}}\equiv 0 \mod p\]
with $|\{i_1,\ldots,i_{2k}\}|>1.01k$.
\end{definition}
\begin{theorem}[{\cite[Theorem~3.8]{Jai19},~c.f.~\cite[Theorem~1.4]{FJLS2018}}]\label{thm:halasz-fp}
Given an odd prime $p$, integer $n$, and vector $\bs{a}=(a_1,\ldots, a_n) \in\mb{F}_p^{n}\setminus\{\bs{0}\}$, suppose that an integer $0\le k \le n/2$ and positive real $L$ satisfy $30L \le |\on{supp}(\bs{a})|$ and $80kL \le n$. Then
\[\rho_{\mb{F}_p}(\bs{a})\le\frac{1}{p}+C_{\ref{thm:halasz-fp}}\frac{R_k^\ast(\bs{a}) + (40k^{0.99}n^{1.01})^k}{2^{2k} n^{2k} L^{1/2}} + e^{-L}. \]
\end{theorem}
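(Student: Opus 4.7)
The plan is to prove this Halász-type inequality over $\mb{F}_p$ by combining Fourier inversion with a moment method, following the classical template of Halász's original argument adapted to $\mb{F}_p$.

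First, I would apply Fourier inversion in $\mb{F}_p$ to obtain
\[
\rho_{\mb{F}_p}(\bs{a}) - \frac{1}{p} \;\le\; \frac{1}{p}\sum_{\ell\in\mb{F}_p^{\times}}\prod_{j=1}^{n}|\cos(2\pi\ell a_j/p)|,
\]
and then use the pointwise estimate $|\cos(\pi y)|\le\exp(-c\snorm{y}_{\mb{R}/\mb{Z}}^{2})$ to rewrite the right-hand side as $\tfrac{1}{p}\sum_{\ell\ne 0}\exp(-cT(\ell))$, where $T(\ell):=\sum_{j}\snorm{\ell a_j/p}_{\mb{R}/\mb{Z}}^{2}$. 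Characters $\ell$ with $T(\ell)>L$ then contribute at most $e^{-L}$ in total, accounting for the last term in the claim.

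Second, for the low-energy set $\{\ell : T(\ell)\le L\}$, I would use the moment identity $\cos^{2k}(2\pi t) = \mb{E}_{\bs\varepsilon}\exp(2\pi i t (\varepsilon_1+\cdots+\varepsilon_{2k}))$ with $\varepsilon_r \in \{\pm 1\}$ i.i.d., combined with the orthogonality relation $\sum_{\ell\in\mb{F}_p}e^{2\pi i\ell s/p}=p\cdot\one[s\equiv 0\bmod p]$, to turn the $2k$-th power sum into a count of signed zero-sum tuples:
\[
\sum_{\ell\in\mb{F}_p}\prod_{j=1}^n\cos^{2k}(2\pi\ell a_j/p) \;=\; \frac{p}{2^{2kn}}\,N(\bs{a},k),
\]
where $N(\bs{a},k) := \#\{(\varepsilon_{j,r})\in\{\pm 1\}^{n\times 2k} : \sum_{j,r}\varepsilon_{j,r}a_j\equiv 0\pmod p\}$. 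A Cauchy--Schwarz step against a weight involving $T(\ell)$ introduces the $L^{-1/2}$ factor appearing in the target bound and reduces the low-energy estimate to bounding $N(\bs{a},k)$.

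Finally, I would split $N(\bs{a},k)$ according to the number of distinct indices $j \in [n]$ carried by a given tuple. Tuples with more than $1.01k$ distinct indices are, after the multinomial translating $2k$-tuples into sequences, precisely counted by $R_k^\ast(\bs{a})$, producing the $R_k^\ast$ contribution with denominator $2^{2k}n^{2k}$. Tuples with at most $1.01k$ distinct indices form a ``diagonal'' contribution that can be enumerated directly by first choosing the multiset of distinct indices and then the multiplicities and signs, producing the $(40k^{0.99}n^{1.01})^k$ bound. The main obstacle is the precise bookkeeping that yields the normalization $(2^{2k}n^{2k}L^{1/2})^{-1}$: the Cauchy--Schwarz, the moment identity, and the combinatorial split must be balanced so that all constants come out cleanly. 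The two side conditions serve exactly this balancing -- $30L \le |\on{supp}(\bs{a})|$ ensures the cosine bound is effective on enough nonvanishing coordinates, and $80kL \le n$ ensures the diagonal overcount does not swamp the $R_k^\ast$ contribution.
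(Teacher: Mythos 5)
First, note that the paper does not prove this statement at all: it is imported verbatim from \cite[Theorem~3.8]{Jai19} (cf.\ \cite[Theorem~1.4]{FJLS2018}), so the comparison is with the known proof there, which is Hal\'asz's method adapted to $\mb{F}_p$. Your opening step (Fourier inversion, $|\cos|\le\exp(-c\snorm{\cdot}_{\mb{R}/\mb{Z}}^2)$, and discarding the characters with $T(\ell)>L$ at cost $e^{-L}$) matches that proof. But the core of your sketch has two genuine gaps.

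(1) Your moment identity is the wrong one. You take $\sum_{\ell}\prod_{j=1}^n\cos^{2k}(2\pi\ell a_j/p)$, in which \emph{every} coordinate $j\in[n]$ appears exactly $2k$ times; this counts sign arrays in $\{\pm1\}^{n\times 2k}$ and carries the normalization $2^{2kn}$, and it has no meaningful split by ``number of distinct indices'', so it cannot produce $R_k^\ast(\bs{a})$ nor the denominator $2^{2k}n^{2k}$. The quantity that connects to $R_k^\ast$ is the $2k$-th moment of the \emph{sum}: $\frac1p\sum_{\ell\in\mb{F}_p}\bigl(2\sum_{j=1}^n\cos(2\pi\ell a_j/p)\bigr)^{2k}$ equals the number of pairs $(\epsilon,(i_1,\dots,i_{2k}))$ with $\sum_r\epsilon_r a_{i_r}\equiv 0 \pmod p$, which splits into $R_k^\ast(\bs{a})$ (more than $1.01k$ distinct indices) plus a diagonal term of size at most $(40k^{0.99}n^{1.01})^k$. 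Moreover the logic runs in the opposite direction from what you describe: for $\ell$ in the level set $S_t=\{\ell\neq 0: T(\ell)\le t\}$ with $kt\lesssim n$ (this is where $80kL\le n$ enters), each term $\bigl(2\sum_j\cos\bigr)^{2k}$ is at least $\tfrac12\,2^{2k}n^{2k}$, so the counting identity gives an \emph{upper bound on $|S_t|$} in terms of $R_k^\ast+\text{diagonal}$; one does not bound the exponential sum by the count directly. (2) The $L^{-1/2}$ does not come from ``a Cauchy--Schwarz step against a weight involving $T(\ell)$''; a straightforward Cauchy--Schwarz only recovers the trivial bound without the $\sqrt{L}$ gain, which would yield a strictly weaker inequality. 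The gain is exactly Hal\'asz's iterated-sumset/spreading lemma: signed $s$-fold sums of elements of $S_t$ lie in $S_{s^2t}$, and an anticoncentration (random-walk collision) estimate shows these sums occupy $\gtrsim\sqrt{s}\,|S_t|$ residues, whence $|S_t|\lesssim |S_L|(t/L)^{1/2}$ up to constants; summing $e^{-t}|S_t|$ over $t\le L$ then produces the $|S_L|/\sqrt{L}$ factor (the support condition $30L\le|\on{supp}(\bs{a})|$ is used in this part and in the tail, not merely to make the cosine bound ``effective''). Without the correct moment identity and the spreading lemma, your outline does not yield the stated bound.
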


\begin{theorem}[{\cite[Theorem~3.9]{Jai19}}]
\label{thm:counting}
Let $p$ be a prime and let $k, s_1,s_2,d\in [n], t\in [1,p]$ be such that $s_1 \le s_2$. 
Let 
\begin{small}
\[\mr{Bad}^{d}_{k,s_1,s_2,\ge t}(n) = \Big\{\bs{a} \in \mb{F}_{p}^{n}\colon |\on{supp}(\bs{a})|=d \text{ and } \forall \bs{b}\subset \bs{a}|_{\on{supp}(\bs{a})} \text{ s.t. } s_2\ge|\bs{b}|\ge s_1 : R_k^\ast(\bs{b})\ge t\cdot \frac{2^{2k}\cdot |\bs{b}|^{2k}}{p}\Big\}.\]
\end{small}
Then, 
\[|\mr{Bad}^{d}_{k,s_1,s_2,\ge t}(n)|\le\binom{n}{d}p^{d+s_2}(0.01t)^{-d+\frac{s_1}{s_2}d}.\] 
\end{theorem}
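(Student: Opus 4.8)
The plan follows the counting Littlewood--Offord philosophy of \cite{FJLS2018}. First pay the factor $\binom nd$ to fix the support set $\on{supp}(\bs a)$, which reduces the task to bounding the number of value-tuples $\bs c\in(\mb F_p^\times)^d$ such that \emph{every} sub-tuple of $\bs c$ of length in $[s_1,s_2]$ is ``additively rich'', i.e.\ has $R_k^\ast(\cdot)\ge t\cdot 2^{2k}|\cdot|^{2k}/p$. The two ingredients are a structural statement for a single additively rich tuple, and a chaining argument propagating it along $\bs c$.

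\emph{Structure of one rich tuple.} Writing $\one[x\equiv 0]=\tfrac1p\sum_{\xi\in\mb F_p}\exp(2\pi i\xi x/p)$ and expanding, one finds that for a tuple $\bs b$ of length $s$ the quantity $R_k^\ast(\bs b)$ is --- up to the correction forced by requiring the index multiset to have more than $1.01k$ distinct elements, whose purpose is precisely to discard the trivial $\pm$-paired cancellations so that the $\xi=0$ frequency is the genuine random baseline --- comparable to $\tfrac1p\sum_{\xi}\big(\sum_i 2\cos(2\pi\xi b_i/p)\big)^{2k}$, whose $\xi=0$ term is $\asymp 2^{2k}s^{2k}/p$. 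Hence $R_k^\ast(\bs b)\ge t\cdot 2^{2k}s^{2k}/p$ forces the nonzero frequencies to carry a factor $\gtrsim t$, so (there being fewer than $p$ of them) some $\xi\ne 0$ has $\big|\sum_i 2\cos(2\pi\xi b_i/p)\big|^{2k}\gtrsim t\cdot 2^{2k}s^{2k}/p$; using $\cos(2\pi\theta)\le 1-c\snorm{\theta}_{\mb{R}/\mb{Z}}^2$ this yields $\sum_i\snorm{\xi b_i/p}_{\mb{R}/\mb{Z}}^2\lesssim s\log(p/t)/k$, and a Markov bound then places all but an $O(t^2\log(p/t)/k)$-fraction of the $b_i$ into the set $\{y:\snorm{\xi y/p}_{\mb{R}/\mb{Z}}\le 1/(2t)\}$, which has at most $p/t+1$ residues --- a saving of a factor $\asymp t$ over the trivial count $p$, valid once $k$ is large in terms of $t$ and $\log p$, as the hypotheses of \cref{thm:halasz-fp,thm:counting} ensure.

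\emph{Chaining and counting.} Slide a window of length $s_2$ across the $d$ support coordinates; each window, and every sub-window of length in $[s_1,s_2]$, is additively rich, hence structured as above, and two consecutive windows overlap in $\ge s_2-1$ coordinates, a $(1-o(1))$-fraction of which lie in each window's scaled interval. A rigidity argument --- many spread-out residues lying simultaneously in two dilated intervals of length $\asymp p/t$ forces the dilations to be proportional, after possibly refining to a common short sub-progression --- then produces a single scaling $\xi$ valid for all of $\bs c$; applying the structural bound to a family of \emph{disjoint} windows gives $\sum_{i=1}^d\snorm{\xi c_i/p}_{\mb{R}/\mb{Z}}^2\lesssim (\log(p/t)/k)\,d$, so that, choosing $k\gtrsim t^2(s_2/s_1)\log p$, at most $(s_1/s_2)d$ coordinates are ``outliers'' with $\snorm{\xi c_i/p}_{\mb{R}/\mb{Z}}>1/(2t)$, the rest lying in a single set of size $\le Cp/t$. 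Counting: $\binom nd$ for the support, a harmless $p^{O(1)}\le p^{s_2}$ for the scaling and interval data (using $s_2\ge 2$), $\binom{d}{(s_1/s_2)d}p^{(s_1/s_2)d}$ for the outlier positions and values, and $(Cp/t)^{(1-s_1/s_2)d}$ for the remaining coordinates; collecting the exponents of $p$ gives $d+s_2$, and once the constants are fixed so the crude outlier factor is swallowed by the $(0.01t)^{-1}$-saving one obtains the claimed $\binom nd\,p^{d+s_2}(0.01t)^{-d+(s_1/s_2)d}$.

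The main obstacle is the chaining step: making the rigidity precise --- excluding a slow drift of the scaling $\xi$ across windows and carrying out whatever passage to sub-progressions that requires --- and then tracking the outlier budget carefully enough to land on exactly the exponents $d+s_2$ on $p$ and $-d(1-s_1/s_2)$ on $0.01t$ rather than on a lossier bound. The single-tuple Fourier computation is routine once the quantitative link between the excess factor $t$, the moment order $k$, and the interval length is pinned down, which is dictated by the interplay of the parameters in \cref{thm:halasz-fp,thm:counting}.
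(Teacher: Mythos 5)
This statement is imported verbatim from \cite[Theorem~3.9]{Jai19} (which in turn is a restatement of the counting inverse Littlewood--Offord theorem of \cite{FJLS2018}); the present paper does not prove it, so there is no in-paper argument to compare against. Judged on its own terms, your sketch has a genuine gap at the step you yourself flag as the obstacle.

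Your single-window Fourier computation is sound as far as it goes: using $R_k^\ast(\bs b)\le R_k(\bs b)=\tfrac1p\sum_\xi\bigl(\sum_i 2\cos(2\pi\xi b_i/p)\bigr)^{2k}$, a lower bound on $R_k^\ast$ does produce a nonzero frequency $\xi$ with $\sum_i\snorm{\xi b_i/p}_{\mb R/\mb Z}^2\lesssim s\log(p/t)/k$, and Markov then puts most of the $b_i$ into a residue set of size $\approx p/t$. But the chaining step is not a gap you can wave through. If you simply give each of the $\sim d/s_2$ disjoint blocks its own $\xi_j$, the overhead for choosing the scalings is $p^{d/s_2}$, and the theorem's hypotheses place no relation between $d$ and $s_2$ (only $s_1\le s_2$ and $k,s_1,s_2,d\in[n]$); when $d>s_2^2$ this overhead exceeds the budget $p^{s_2}$ and cannot be recouped by the $(0.01t)^{-d(1-s_1/s_2)}$ factor when $t$ is of moderate size. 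So you genuinely need a single $\xi$ (or a set of size $p^{O(s_2)}$ of ``$\xi$-profiles''), and your ``rigidity'' claim that overlapping windows pin down the scaling is false as stated: if the window's entries lie in a length-$s_2$ arithmetic progression modulo $p$, the set of $\xi\in\mb F_p^\times$ dilating it into a short interval is itself an interval of length $\gg 1$, so consecutive windows admit many unrelated scalings, and ``refining to a common short sub-progression'' is exactly the delicate GAP-bookkeeping that inverse Littlewood--Offord proofs in the Tao--Vu/Rudelson--Vershynin line have to do with care; iterating it $d/s_2$ times without blowing up the progression rank or losing a multiplicative factor per step is the whole difficulty, and your sketch does not engage with it.

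For comparison, the actual argument in \cite{FJLS2018} (and its repackaging in \cite{Jai19}) is a direct combinatorial count: one fixes the support, designates a small fraction $\approx (s_1/s_2)d$ of the support coordinates as ``free'' (paying $p$ apiece), and shows that the richness hypothesis --- applied per block of length $s_2$, but as a counting statement about representations $b_j=\pm b_{i_1}\pm\cdots\pm b_{i_{2k-1}}$ rather than a Fourier mode --- forces each of the remaining coordinates to lie in a set of size $\le p/(0.01t)$ once the free coordinates are revealed. This route never has to choose a scaling $\xi$, so the $p^{d/s_2}$ overhead you would otherwise incur simply never appears, and the $p^{s_2}$ slack in the statement is there to absorb edge effects of the block decomposition rather than to pay for Fourier/progression data. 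If you want to make a Fourier-flavored proof go through, you should at minimum solve the chaining problem explicitly and account for the case $d\gg s_2^2$; as written the proposal does not establish the stated bound.
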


We now are in position to prove \cref{prop:structure-1}. The proof given is essentially identical to that in \cite{FJ19}. However, we need to be more careful regarding the level of unstructuredness obtained in the argument.

\begin{proof}[Proof of \cref{prop:structure-1}]
Let $r = n/\log p$. Let $k = n^{1/4}$, $s_1 = \sqrt{n\log n}$, $s_2 = n^{3/4}$, and choose some $n/(16\log p)\le d\le n$ (so that $s_2\le d$ in particular). First use \cref{lem:not-sparse} to rule out $d$-sparse vectors $\bs{v}$. Next, let $L = n^{1/4}$. Consider some $\sqrt{L}\le t\le p$, if it exists.

Consider a fixed $\bs{v}\in\mr{Bad}_{k,s_1,s_2,\ge t}^d\setminus\mr{Bad}_{k,s_1,s_2,\ge 2t}^d$ and a fixed choice of $n-r$ rows of $M_n-\lambda I_n$. We wish to bound the probability that $\bs{v}$ is orthogonal to all those rows. By definition, there is a set $S\subseteq\on{supp}(\bs{v})$ of size in $[s_1,s_2]$ such that
\[R_k^\ast(\bs{v}|_S) < 2t\cdot\frac{2^{2k}|S|^{2k}}{p}.\]
Since $\bs{v}$ is orthogonal to all of the given $n-r$ rows, we expose them one-by-one (with any rows in $S$ coming last). The first at least $n-r-s_2$ rows are such that, conditioned on the prior revelations, the random dot product with $\bs{v}$ is zero with probability at most $\rho_{\mb{F}_p}(\bs{v}|_S)$. Furthermore, by \cref{thm:halasz-fp} and the given conditions, which guarantee $30L\le s_1\le|\on{supp}(\bs{v}|_S)|$ and $80kL\le s_1\le|S|$, we have
\begin{align*}
\rho_{\mb{F}_p}(\bs{v}|_S)&\le\frac{1}{p}+C_{\ref{thm:halasz-fp}}\frac{R_k^\ast(\bs{v}|_S) + (40k^{0.99}|S|^{1.01})^k}{2^{2k} |S|^{2k} L^{1/2}} + e^{-L}\\
&\le\frac{1}{p} + \frac{2C_{\ref{thm:halasz-fp}}t}{p\sqrt{L}} + \frac{10^k C_{\ref{thm:halasz-fp}}}{L^{1/2}}\bigg(\frac{k}{|S|}\bigg)^{0.99k} + e^{-L}\\
&\le\frac{Ct}{p\sqrt{L}}
\end{align*}
for all sufficiently large $n$. 
Multiplying over all the rows, and taking a union bound over the possible choices of $\bs{v}$ (\cref{thm:counting}) and collection of $n-r$ rows, we have a bound of
\begin{align*}
\binom{n}{d}p^{d+s_2}(0.01t)^{-d+\frac{s_1}{s_2}d}&\cdot\binom{n}{r}\cdot\bigg(\frac{Ct}{p\sqrt{L}}\bigg)^{n-r-s_2}\\
&\le (C')^np^dt^{-d+\frac{s_1}{s_2}d}\bigg(\frac{t}{p\sqrt{L}}\bigg)^n\\
&\le\exp(C''n\sqrt{\log n})(t/p)^{n-d}L^{-n/2}\le\exp(-n(\log n)/9)
\end{align*}
for sufficiently large $n$. Union bounding over all possible choices of $d$ and a dyadic partition of $t$ shows that there is an appropriately small chance of having such a vector orthogonal to $n-r$ rows for any $t\ge\sqrt{M}$.

The remaining vectors $\bs{v} \in \mb{F}_p^n$ with $|\on{supp}(\bs{v})| \geq n/(16\log{p})$ all have some subset $S\subseteq\on{supp}(\bs{v})$ such that
\[R_k^\ast(\bs{v}|_S)\le\sqrt{L}\cdot\frac{2^{2k}|S|^{2k}}{p},\]
and applying \cref{thm:halasz-fp} once again finishes.
\end{proof}

\end{document}